\documentclass{amsart}
\usepackage{amsfonts}

%%%%%%%%%%%%%%%%%%%%%%%%%%%%%%%%%%%%%%%%%%%%%%%%%%%%%%%%%%%%%%%%%%%%%%%%%%%%%%%%%%%%%%%%%%%%%%%%%%%
\usepackage{amsmath}
\usepackage{geometry}
\usepackage{graphicx}
\usepackage{float}
\usepackage{subfig}

%TCIDATA{OutputFilter=Latex.dll}
%TCIDATA{LastRevised=Wednesday, January 27, 2010 08:06:46}
%TCIDATA{<META NAME="GraphicsSave" CONTENT="32">}

\setcounter{MaxMatrixCols}{10}
\numberwithin{equation}{section}
\newtheorem{theorem}{Theorem}[section]

\newtheorem{corollary}{Corollary}[section]

\newtheorem{definition}{Definition}[section]
\newtheorem{example}{Example}[section]

\newtheorem{lemma}{Lemma}[section]

\newtheorem{problem}[theorem]{Problem}
\newtheorem{proposition}{Proposition}[section]
\newtheorem{remark}{Remark}

\begin{document}
\title{The Exchange Value Embedded In A Transport System}
\author{Qinglan Xia}
\address[Q. Xia]{University of California at Davis\\
Department of Mathematics\\
Davis, CA, 95616}
\email{qlxia@math.ucdavis.edu}
\urladdr{http://math.ucdavis.edu/\symbol{126}qlxia}
\thanks{This work is supported by an NSF grant DMS-0710714.}
\author{Shaofeng Xu}
\address[S. Xu]{University of California at Davis\\
Department of Economics\\
Davis, CA, 95616}
\email{sxu@ucdavis.edu}
\subjclass[2000]{91B32, 90B18, 49Q20, 58E17. \textit{Journal of Economic
Literature Classification.} D61, C65. }
\keywords{Exchange Value; Branching Transport System; Ramified Optimal
Transportation; Utility.}
\maketitle

\begin{abstract}
\noindent This paper shows that a well designed transport system has an
embedded exchange value by serving as a market for potential exchange
between consumers. Under suitable conditions, one can improve the welfare of
consumers in the system simply
by allowing some exchange of goods between consumers during
transportation without incurring additional transportation costs. We propose an
explicit valuation formula to measure this exchange value for a given
compatible transport system. This value is always nonnegative and bounded
from above. Criteria based on transport structures, preferences and prices
are provided to determine the existence of a positive exchange value.
Finally, we study a new optimal transport problem with an objective taking
into account of both transportation cost and exchange value.
\end{abstract}

\section{Introduction}

A transport system is used to move goods from sources to targets. In
building such a system, one typically aims at minimizing the total
transportation cost. This consideration has motivated the theoretical
studies of many optimal transport problems. For instance, the well-known
Monge-Kantorovich problem (e.g. \cite{Ambrosio}, \cite{Brenier}, \cite%
{caffarelli}, \cite{evan2}, \cite{mccann}, \cite{otto}, \cite{mtw}, \cite{monge}, \cite%
{villani}) studies how to find an optimal transport map or transport plan
between two general probability measures with the optimality being measured
by minimizing some cost function. Applications of the Monge-Kantorovich problem
to economics may be found in the literature such as 
\cite{kantorovich}, \cite{buttazzo} and \cite{mccann1}. The present paper gives
another application by introducing the economics notion of an ``exchange
value'' which is suitable for a ramified transport system.
\textit{Ramified optimal transportation}
has been recently proposed and studied (e.g. \cite{gilbert}, \cite{xia1}, \cite{msm}, \cite{xia2}
, \cite{BCM}, \cite{buttazzo}, \cite{xia4}, \cite{Solimini}, \cite{paolini}, %
\cite{book}, \cite{xia5}, \cite{xia6}) to model a branching transport
system. An essential feature of such a transportation is to favor
transportation in groups via a cost function which depends concavely on
quantity. Transport systems with such branching structures are observable
not only in nature as in trees, blood vessels, river channel networks,
lightning, etc. but also in efficiently designed transport systems such as
used in railway configurations and postage delivery networks. Those studies
have focused on the cost value of a branching transport system in terms of
its effectiveness in reducing transportation cost.

In this article, we show that there is another value, named as \textit{\
exchange value}, embedded in some ramified transport systems.
\begin{figure}[h]
\centering
\subfloat[
$G_1$]{\label{G_1}\includegraphics[width=0.4\textwidth,
height=2.25in]{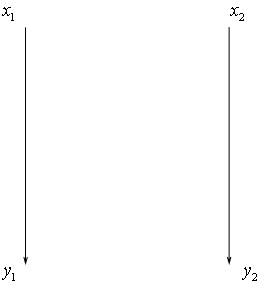}} \hspace{0.5in}
\subfloat[
$G_2$]{\label{G_2}\includegraphics[width=0.4\textwidth, height=2.25in]{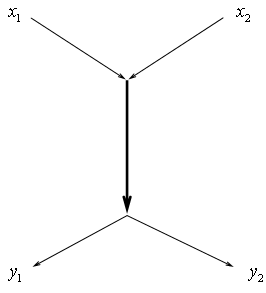}}
\caption{Unlike a traditional transport system $G_{1}$, a ramified transport
system $G_{2}$ provides an exchange value.}
\end{figure}
As an illustration, we consider a spacial economy with two goods located at two
distinct points $\left\{ x_{1},x_{2}\right\} $ and two consumers living at
two different locations $\left\{ y_{1},y_{2}\right\} $. The spacial
distribution is shown in Figure 1. Suppose consumer 1 favors good 2 more
than good 1. However, good 2 may be more expensive than good 1 for some
reason such as a higher transportation fee. As a result, she buys good 1
despite the fact that it is not her favorite. On the contrary, consumer 2
favors good 1 but ends up buying good 2, as good 1 is more expensive than
good 2 for him. Given this purchase plan, a traditional transporter will
ship the ordered items in a transport system like $G_{1}$ (see Figure \ref%
{G_1}). However, as shown in \cite{xia1} etc, a transport system like $G_{2}$
(see Figure \ref{G_2}) with some branching structure might be more cost
efficient than $G_{1}$. One may save some transportation cost by using a
transport system like $G_{2}$ instead of $G_{1}$. Now, we observe another
very interesting phenomenon about $G_{2}$. When using this transport system,
one can simply switch the items which leads to consumer 1 getting good 2 and
consumer 2 receives good 1. This exchange of items makes both consumers
better off since they both get what they prefer. More importantly, no extra
transportation cost is incurred during this exchange process. In other
words, a ramified transport system like $G_{2}$ may possess an exchange
value, which cannot be found in other transport systems like $G_{1}$.

The \textit{exchange value} concept of a transport system that we propose
here is valuable for both economics and mathematics. Existing market
theories (e.g. \cite{Arrow}, \cite{Debreu}, \cite{Debreu2}, \cite{Eaton}, %
\cite{Koopmans}, \cite{Lange}, \cite{Mas-Colell}, \cite{Samuelson}) focus on
the mechanism of exchanges between economic agents on an abstract market
with relatively few discussions on its form. Our study complements the
existing theories by showing that a transport system actually serves as a
concrete market whose friction for exchange depends on the structure of the
transport system as well as factors like preferences, prices, spatial
distribution, etc. The existence of such an exchange value is due to the
fact that the transport system provides a medium for potential exchange
between agents. From the perspective of mathematical theory on optimal
transport problem, our study provides another rationale for ramified
structure which usually implies a potential exchange value. Furthermore, a
new optimality criterion needs to be considered when building a transport
system which leads to a new mathematical problem. Instead of simply
minimizing the transportation cost, one might have to minimize the
difference between transportation cost and exchange value.

The remainder of this paper is organized as follows. Section 2 describes the
model environment with a brief review of consumer's problem and related
concepts from ramified optimal transportation. Sections 3 and 4 contain the
main results of the paper. Section 3 proposes an explicit valuation formula
to measure the exchange value for a given compatible transport system. The
exchange value is defined by solving a maximization problem, which has a
unique solution under suitable conditions. Criteria based on transport
structures, preferences and prices are provided to determine the existence
of a positive exchange value. We show that a reasonable combinations of
these factors guarantees a positive exchange value. Section 4 studies a new
optimal transport problem with an objective taking into account of both
transportation cost and exchange value.

In this paper, we will use the following notations:

\begin{itemize}
\item $X$: a compact convex subset of a Euclidean space $\mathbb{R}^{m}$.

\item $\mathbb{R}_{+}^{k}$: a subset of $\mathbb{R}^{k}$ defined as $\left\{
\left( x_{1},...,x_{k}\right) \in \mathbb{R}^{k}:x_{i}\geq 0,\text{ }
i=1,...,k\right\} .$

\item $\mathbb{R}_{++}^{k}$: a subset of $\mathbb{R}^{k}$ defined as $
\left\{ \left( x_{1},...,x_{k}\right) \in \mathbb{R}^{k}:x_{i}>0,\text{ }
i=1,...,k\right\} .$

\item $p_{j}$: a price vector in $\mathbb{R}_{++}^{k}$ faced by consumer $j$
, $j=1,...,\ell .$

\item $q_{j}$: a consumption vector in $\mathbb{R}_{+}^{k}$ of consumer $j$,
$j=1,...,\ell .$

\item $\mathcal{E}$: an economy as defined in (\ref{economy}).

\item $\bar{q}$: the consumption plan as defined in (\ref{q_bar}).

\item $e_{j}\left( p_{j},\tilde{u}_{j}\right) $: the expenditure function of
consumer $j$, $j=1,...,\ell $, as defined in (\ref{ex_min}).

\item $\mathbf{a}$: the atomic measure representing sources of goods, see ( %
\ref{source}).

\item $\mathbf{b}$: the atomic measure representing consumers, see (\ref%
{consumer}).

\item $G$: a transport path from $\mathbf{a}$ to $\mathbf{b}$.

\item $q$: a transport plan from $\mathbf{a}$ to $\mathbf{b}$.

\item $S(q)$: the total expenditure function as defined in (\ref{S_function}%
).

\item $\Omega \left( \bar{q}\right) $: the set of all transport paths
compatible with $\bar{q}$, as defined in (\ref{Omega}).

\item $\mathcal{F}_{G}$: the set of all feasible transport plans of $G$ as
defined in (\ref{feasible}).

\item $\mathcal{V}\left( G\right) $: the exchange value of a transport path $%
G$, as defined in (\ref{main_problem}).

\item $\mathbf{M}_{\alpha }\left( G\right) $: the transportation cost of a
transport path $G$ as defined in (\ref{M_a_cost}).
\end{itemize}

\section{Consumer's Problem and Ramified Optimal Transportation}

\subsection{Consumer's Problem}

Suppose there are $k$ sources of different goods which could be purchased by
$\ell $ consumers distributed on $X$. Each source $x_{i}\in X$ supplies only
one type of goods, $i=1,...,k$. Each consumer $j$ located at $y_{j}\in X$
derives utility from consuming $k$ goods according to a utility function $%
u_{j}:\mathbb{R}_{+}^{k}\rightarrow \mathbb{R}:\left(
q_{1j},...,q_{kj}\right) \mapsto u_{j},$ $j=1,...,\ell ,$ where $u_{j}:%
\mathbb{R}_{+}^{k}\rightarrow \mathbb{R}$ is \textit{continuous, concave and
increasing}$,$ $j=1,...,\ell .$ Each consumer $j$ has an initial wealth $%
w_{j}>0$ and faces a price vector $p_{j}=\left( p_{1j},...,p_{kj}\right) \in
\mathbb{R}_{++}^{k},$ $j=1,...,k.$ We allow the prices to vary across
consumers to accommodate the situation where consumers on different
locations may have to pay different prices for the same good. This variation
could be possibly due to different transportation fees. We denote this
economy as
\begin{equation}
\mathcal{E}=\left( U,P,W;x,y\right) .  \label{economy}
\end{equation}

Now, we give a brief review of a consumer's decision problem. Discussions of
these materials can be found in most advanced microeconomics texts (e.g. %
\cite{Mas-Colell}). Each consumer $j$ will choose an utility maximizing
consumption plan given the price $p_{j}$ and wealth $w_{j}.$ More precisely,
the consumption plan $\bar{q}_{j}$ is derived from the following utility
maximizing problem:
\begin{equation}
\bar{q}_{j}\in \arg \max \left\{ u_{j}\left( q_{j}\right) \text{ }|\text{ }%
q_{j}\in \mathbb{R}_{+}^{k},\text{ }p_{j}\cdot q_{j}\leq w_{j}\right\} .
\label{q_bar}
\end{equation}%
Given the continuity and concavity of $u_{j},$ we know this problem has a
solution.

As will be used in defining the exchange value, we also consider the
expenditure minimizing problem for a given utility level $\tilde{u}%
_{j}>u_{j}\left( \mathbf{0}\right) $:
\begin{equation}
e_{j}\left( p_{j},\tilde{u}_{j}\right) =\min \left\{ p_{j}\cdot q_{j}\text{ }%
|\text{ }q_{j}\in \mathbb{R}_{+}^{k},\text{ }u_{j}\left( q_{j}\right) \geq
\tilde{u}_{j}\right\} ,  \label{ex_min}
\end{equation}%
which is actually a problem dual to the above utility maximization problem.
The continuity and concavity of $u_{j}$ guarantee a solution to this
minimization problem. Here, $e_{j}\left( p_{j},\tilde{u}_{j}\right) $
represents the minimal expenditure needed for consumer $j$ to reach a
utility level $\tilde{u}_{j}.$ Since $\tilde{u}_{j}>u_{j}\left( \mathbf{0}%
\right) $, we know that $e_{j}\left( p_{j},\tilde{u}_{j}\right) >0.$ Lemma %
\ref{Properties_e} (see \cite{Mas-Colell}) shows several standard properties
of the expenditure function $e_{j}$.

\begin{lemma}
\label{Properties_e}Suppose that $u_{j}$ is a continuous, increasing utility
function on $\mathbb{R}_{+}^{k}.$ The expenditure function $e_{j}\left(
p_{j},\tilde{u}_{j}\right) $ is

\begin{enumerate}
\item Homogeneous of degree one in $p_{j}.$

\item Strictly increasing in $\tilde{u}_{j}$ and nondecreasing in $p_{ij}$
for any $i=1,...,k.$

\item Concave in $p_{j}.$

\item Continuous in $p_{j}$ and $\tilde{u}_{j}.$
\end{enumerate}
\end{lemma}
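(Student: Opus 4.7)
The plan is to verify the four properties in turn, reusing the fact that the feasible set $\{q_j \in \mathbb{R}_+^k : u_j(q_j) \geq \tilde{u}_j\}$ depends only on $\tilde{u}_j$, not on $p_j$.

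For property (1), replacing $p_j$ by $\lambda p_j$ with $\lambda>0$ scales the objective $p_j \cdot q_j$ pointwise by $\lambda$ without changing the feasible set, so the infimum scales by $\lambda$. For property (2), the monotonicity in $p_{ij}$ is immediate because enlarging a price coordinate weakly enlarges the objective at every feasible point. Strict monotonicity in $\tilde{u}_j$ takes a bit more care: fix $\tilde{u}_j < \tilde{u}_j'$, let $q^*$ attain the minimum at level $\tilde{u}_j'$, and note $q^* \neq \mathbf{0}$ (else $u_j(\mathbf{0}) \geq \tilde{u}_j' > u_j(\mathbf{0})$), whence $p_j \cdot q^*>0$ by $p_j \in \mathbb{R}_{++}^k$. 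Using continuity of $u_j$ together with $u_j(q^*) \geq \tilde{u}_j' > \tilde{u}_j$, one can pick $t \in (0,1)$ close enough to $1$ so that $u_j(t q^*) \geq \tilde{u}_j$, producing a strictly cheaper feasible point and hence $e_j(p_j,\tilde{u}_j) \leq t\, p_j\cdot q^* < e_j(p_j,\tilde{u}_j')$.

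For property (3), given $p_j,p_j'$ and $\lambda \in [0,1]$, let $q''$ minimize the expenditure at $p_j'':=\lambda p_j + (1-\lambda)p_j'$ and note $q''$ is feasible for both $p_j$ and $p_j'$ since the feasible set is common. Splitting $p_j'' \cdot q'' = \lambda p_j \cdot q'' + (1-\lambda) p_j' \cdot q''$ and bounding each term below by the corresponding minimum yields $e_j(p_j'',\tilde{u}_j) \geq \lambda e_j(p_j,\tilde{u}_j) + (1-\lambda)e_j(p_j',\tilde{u}_j)$.

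Property (4) is where the real work lies. Continuity in $p_j$ on $\mathbb{R}_{++}^k$ follows for free from (3) since every finite concave function on an open convex set is continuous. For joint continuity, I would recast $e_j$ as the value function of a parametric minimization problem and invoke the Berge maximum theorem. Restricting to a compact ball large enough to contain all relevant minimizers, the feasible correspondence $F(\tilde{u}_j) := \{q_j \in \mathbb{R}_+^k : u_j(q_j) \geq \tilde{u}_j\}$ is upper hemicontinuous because its graph is closed by continuity of $u_j$. The hard part is lower hemicontinuity in $\tilde{u}_j$: given $q \in F(\tilde{u}_j)$ and $\tilde{u}_j^n \to \tilde{u}_j$, for the indices with $\tilde{u}_j^n \leq \tilde{u}_j$ one may take $q^n = q$, while for $\tilde{u}_j^n > \tilde{u}_j$ one perturbs $q$ in a direction along which $u_j$ strictly increases (available since $u_j$ is increasing) to produce $q^n \to q$ with $u_j(q^n) \geq \tilde{u}_j^n$, the size of the perturbation tending to zero by continuity of $u_j$. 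Once Berge's theorem applies, both the value function $e_j$ and the argmin correspondence are continuous, finishing (4).
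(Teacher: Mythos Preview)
Your argument is correct and is essentially the standard textbook development of these properties. The paper itself does not give a proof of this lemma at all: it simply states the result and refers the reader to Mas-Colell, Whinston and Green for these well-known facts about the expenditure function, so there is no in-paper argument to compare against. What you have written is a clean rendering of exactly the kind of proof one finds in that reference, including the use of Berge's maximum theorem for joint continuity; the only point worth tightening is to make explicit which notion of ``increasing'' you rely on when producing the perturbation direction in the lower-hemicontinuity step, since you need a direction of strict increase from every feasible $q$.
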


The following lemma shows a nice property of $e_{j}$ when $u_{j}$ is
homogeneous. This property will be used in the next section to characterize
the solution set of the maximization problem defining exchange value.

\begin{lemma}
\label{homogeneous}If $u_{j}:\mathbb{R}_{+}^{k}\rightarrow \mathbb{R}$ is
homogeneous of degree $\beta _{j}>0$, then $e_{j}\left( p_{j},\tilde{u}%
_{j}\right) $ is homogeneous of degree $\frac{1}{\beta _{j}}$ in $\tilde{u}%
_{j}$, which implies
\begin{equation*}
e_{j}\left( p_{j},\tilde{u}_{j}\right) =e_{j}\left( p_{j},1\right) \left(
\tilde{u}_{j}\right) ^{\frac{1}{\beta _{j}}}.
\end{equation*}
\end{lemma}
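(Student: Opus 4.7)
The plan is to prove the homogeneity of $e_j$ of degree $1/\beta_j$ in $\tilde u_j$ directly from the definition in (\ref{ex_min}), and then to recover the explicit formula by setting $\tilde u_j = 1$. The key tool is the fact that when $u_j$ is homogeneous of degree $\beta_j$, scaling the consumption bundle by a factor $t>0$ scales the utility by $t^{\beta_j}$, so the feasibility constraint $u_j(q_j)\ge \tilde u_j$ transforms in a controlled way under rescaling.

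First I would fix $\lambda>0$ and an arbitrary admissible utility level $\tilde u_j > u_j(\mathbf 0) = 0$, and aim to prove $e_j(p_j,\lambda\tilde u_j) = \lambda^{1/\beta_j} e_j(p_j,\tilde u_j)$. For the inequality $\le$, take a minimizer $q_j^{*}$ for the problem at utility level $\tilde u_j$. Set $q_j' := \lambda^{1/\beta_j} q_j^{*}$, which lies in $\mathbb{R}_+^k$ because $\lambda^{1/\beta_j}>0$. By homogeneity,
\begin{equation*}
u_j(q_j') = \bigl(\lambda^{1/\beta_j}\bigr)^{\beta_j} u_j(q_j^{*}) = \lambda\, u_j(q_j^{*}) \ge \lambda\tilde u_j,
\end{equation*}
so $q_j'$ is feasible for the problem at utility level $\lambda\tilde u_j$, with cost $p_j\cdot q_j' = \lambda^{1/\beta_j}\, e_j(p_j,\tilde u_j)$. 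This yields the first inequality.

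For the reverse inequality $\ge$, I would run the symmetric argument. Taking a minimizer $\hat q_j$ of the problem at utility level $\lambda\tilde u_j$ and forming $\lambda^{-1/\beta_j}\hat q_j$, the same homogeneity computation shows this rescaled bundle is feasible at utility level $\tilde u_j$, giving $e_j(p_j,\tilde u_j)\le \lambda^{-1/\beta_j}e_j(p_j,\lambda\tilde u_j)$. Combining the two bounds establishes homogeneity of degree $1/\beta_j$ in $\tilde u_j$. Finally, choosing $\lambda = \tilde u_j$ and evaluating at the base point $1$ gives $e_j(p_j,\tilde u_j) = e_j(p_j,1)\,(\tilde u_j)^{1/\beta_j}$, as claimed.

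There is no real obstacle, only one subtle checkpoint: the argument uses both $\lambda>0$ and $\beta_j>0$ so that $\lambda^{1/\beta_j}$ is well defined and, more importantly, so that the feasibility constraint is preserved rather than reversed under the scaling. One should also verify that the set over which the minimum is taken is nonempty for every $\tilde u_j>0$ — this follows since $u_j$ is continuous, increasing, and homogeneous of positive degree, hence unbounded above along any positive ray — and that the minimum is attained, which the paper has already noted from continuity and concavity of $u_j$.
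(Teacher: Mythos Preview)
Your proof is correct and follows essentially the same idea as the paper's: both exploit the fact that $q_j \mapsto \lambda^{-1/\beta_j} q_j$ is a bijection of $\mathbb{R}_+^k$ carrying the feasible set at level $\lambda\tilde u_j$ onto that at level $\tilde u_j$. The only cosmetic difference is that the paper performs this as a single change of variables inside the $\min$, obtaining the equality in one line, whereas you split it into two inequalities by picking minimizers on each side; the content is the same.
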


\begin{proof}
For any $\lambda >0$, since $u_{j}$ is homogeneous of degree $\beta _{j}$,
we have
\begin{eqnarray*}
e_{j}\left( p_{j},\lambda \tilde{u}_{j}\right) &=&\min \left\{ p_{j}\cdot
q_{j}\text{ }|\text{ }q_{j}\in \mathbb{R}_{+}^{k},\text{ }u_{j}\left(
q_{j}\right) \geq \lambda \tilde{u}_{j}\right\} \\
&=&\min \left\{ p_{j}\cdot q_{j}\text{ }|\text{ }q_{j}\in \mathbb{R}_{+}^{k},%
\text{ }u_{j}\left( \left( 1/\lambda \right) ^{1/\beta _{j}}q_{j}\right)
\geq \tilde{u}_{j}\right\} \\
&=&\min \left\{ \left( \lambda \right) ^{1/\beta _{j}}p_{j}\cdot \tilde{q}%
_{j}\text{ }|\text{ }\tilde{q}_{j}\in \mathbb{R}_{+}^{k}\text{, }u_{j}\left(
\tilde{q}_{j}\right) \geq \tilde{u}_{j}\right\} \text{, where }\tilde{q}%
_{j}=(1/\lambda) ^{1/\beta _{j}}q_{j}, \\
&=&\left( \lambda \right) ^{1/\beta _{j}}e_{j}\left( p_{j},\tilde{u}%
_{j}\right) .
\end{eqnarray*}%
Therefore, $e_{j}\left( p_{j},\tilde{u}_{j}\right) $ is homogeneous of
degree $\frac{1}{\beta _{j}}$ in $\tilde{u}_{j}.$
\end{proof}

\subsection{Ramified Optimal Transportation}

Let $X$ be a compact convex subset of a Euclidean space $\mathbb{R}^{m}$.
Recall that a Radon measure $\mathbf{a}$ on $X$ is \textit{atomic} if $%
\mathbf{a}$ is a finite sum of Dirac measures with positive multiplicities.
That is
\begin{equation*}
\mathbf{a}=\sum\limits_{i=1}^{k}m_{i}\delta _{x_{i}}
\end{equation*}%
for some integer $k\geq 1$ and some points $x_{i}\in X$, $m_{i}>0$ for each $%
i=1,\cdots ,k$.\

In the environment of the previous section, the $k$ sources of goods can be
represented as an atomic measure on $X$ by
\begin{equation}
\mathbf{a}=\sum\limits_{i=1}^{k}m_{i}\delta _{x_{i}},\text{ where }%
m_{i}=\sum_{j=1}^{\ell }\bar{q}_{ij}\text{,}  \label{source}
\end{equation}%
where $\bar{q}_{j}=\left( \bar{q}_{1j},\cdots ,q_{kj}\right) $ is given by (%
\ref{q_bar}). Also, the $\ell $ consumers can be represented by another
atomic measure on $X$ by
\begin{equation}
\mathbf{b}=\sum\limits_{j=1}^{\ell }n_{j}\delta _{y_{j}}\text{, where }%
n_{j}=\sum_{i=1}^{k}\bar{q}_{ij}\text{.}  \label{consumer}
\end{equation}%
Without loss of generality, we may assume that
\begin{equation*}
\sum_{ij}\bar{q}_{ij}=1,
\end{equation*}%
and thus both $\mathbf{a}$ and $\mathbf{b}$ are probability measures on $X$.

\begin{definition}
(\cite{xia1}) A \textit{transport path from }$\mathbf{a}$\textit{\ to }$%
\mathbf{b}$ is a weighted directed graph $G$ consists of a vertex set $%
V\left( G\right) $, a directed edge set $E\left( G\right) $ and a weight
function $w:E\left( G\right) \rightarrow \left( 0,+\infty \right) $ such
that $\{x_{1},x_{2},...,x_{k}\}\cup \{y_{1},y_{2},...,y_{\ell }\}\subseteq
V(G)$ and for any vertex $v\in V(G)$, there is a balance equation
\begin{equation}
\sum_{e\in E(G),e^{-}=v}w(e)=\sum_{e\in E(G),e^{+}=v}w(e)+\left\{
\begin{array}{c}
m_{i},\text{\ if }v=x_{i}\text{\ for some }i=1,...,k \\
-n_{j},\text{\ if }v=y_{j}\text{\ for some }j=1,...,\ell \\
0,\text{\ otherwise }%
\end{array}%
\right.  \label{balance}
\end{equation}%
where each edge $e\in E\left( G\right) $ is a line segment from the starting
endpoint $e^{-}$ to the ending endpoint $e^{+}$.
\end{definition}

Note that the balance equation (\ref{balance}) simply means the conservation
of mass at each vertex. Viewing $G$ as a one dimensional polyhedral chain,
we have the equation $\partial G=b-a$.

Let
\begin{equation*}
Path\left( \mathbf{a,b}\right)
\end{equation*}%
be the space of all transport paths from $\mathbf{a}$ to $\mathbf{b}$.

\begin{definition}
(e.g. \cite{Ambrosio}, \cite{villani}) A \textit{transport plan} from $%
\mathbf{a}$ to $\mathbf{b}$ is an atomic probability measure
\begin{equation}
q=\sum_{i=1}^{k}\sum_{j=1}^{\ell }q_{ij}\delta _{\left( x_{i},y_{j}\right) }
\label{transport_plan}
\end{equation}%
in the product space $X\times X$ such that
\begin{equation}
\sum_{i=1}^{k}q_{ij}=n_{j}\text{ and }\sum_{j=1}^{\ell }q_{ij}=m_{i}
\label{margins}
\end{equation}%
for each $i$ and $j$. Denote $Plan\left( \mathbf{a},\mathbf{b}\right) $ as
the space of all transport plans from $\mathbf{a}$ to $\mathbf{b}$.
\end{definition}

For instance, the $\bar{q}$ given by (\ref{q_bar}) is a transport plan in $%
Plan\left( \mathbf{a,b}\right) $.

Now, we want to consider the compatibility between a pair of transport path
and transport plan (\cite{xia1}, \cite{book}). Let $G$ be a given transport
path in $Path\left( \mathbf{a,b}\right) $. From now on, we assume that for
each $x_{i}$ and $y_{j}$, there exists at most one directed polyhedral curve
$g_{ij}$ from $x_{i}$ to $y_{j}$. In other words, there exists a list of
distinct vertices
\begin{equation}
V\left( g_{ij}\right) :=\left\{ v_{i_{1}},v_{i_{2}},\cdots ,v_{i_{h}}\right\}
\label{V_g}
\end{equation}%
in $V\left( G\right) $ with $x_{i}=v_{i_{1}}$, $y_{j}=v_{i_{h}}$, and each $%
\left[ v_{i_{t}},v_{i_{t+1}}\right] $ is a directed edge in $E\left(
G\right) $ for each $t=1,2,\cdots ,h-1$. For some pairs of $\left(
i,j\right) $, such a curve $g_{ij}$ from $x_{i}$ to $y_{j}$ may fail to
exist, due to reasons like geographical barriers, law restrictions, etc. If
such curve does not exists, we set $g_{ij}=0$ to denote the {\it empty} directed polyhedral curve. By doing so, we construct a
matrix
\begin{equation}
g=\left( g_{ij}\right) _{k\times \ell }  \label{g_matrix}
\end{equation}%
with each element of $g$ being a polyhedral curve. A very simple example
satisfying these conditions is illustrated in Figure \ref{simple_graph}.
\begin{figure}[h]
\centering \includegraphics[width=0.6\textwidth, height=1.5in]{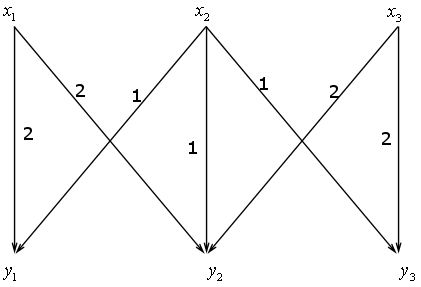}
\caption{A transport path from $4\delta_{x_1}+3\delta_{x_2}+4\delta_{x_3}$ to
$3\delta_{y_1}+5\delta_{y_2}+3\delta_{y_3}$ with $g_{13}=0, g_{31}=0$.}
\label{simple_graph}
\end{figure}

\begin{definition}
A pair $\left( G,q\right) $ of a transport path $G\in Path\left( \mathbf{a,b}%
\right) $ and a transport plan $q\in Plan\left( \mathbf{a,b}\right) $ is
compatible if $q_{ij}=0$ whenever $g_{ij}$ does not exist and
\begin{equation}
G=q\cdot g.  \label{compatible_pair}
\end{equation}
\end{definition}

Here, the equation (\ref{compatible_pair}) means
\begin{equation*}
G=\sum_{i=1}^{k}\sum_{j=1}^{\ell }q_{ij}g_{ij}.
\end{equation*}

In terms of edges, it says that for each edge $e\in E\left( G\right) $, we
have
\begin{equation*}
\sum_{e\subseteq g_{ij}}q_{ij}=w\left( e\right) .
\end{equation*}

\begin{example}
\label{G_bar} Let $x^{\ast }\in X\setminus \left\{ x_{1},\cdots
,x_{k},y_{1},\cdots ,y_{\ell }\right\} $. We may construct a path $\bar{G}%
\in Path\left( \mathbf{a,b}\right) $ as follows. Let
\begin{eqnarray*}
V\left( \bar{G}\right) &=&\left\{ x_{1},\cdots ,x_{k}\right\} \cup \left\{
y_{1},\cdots ,y_{\ell }\right\} \cup \left\{ x^{\ast }\right\} , \\
E\left( \bar{G}\right) &=&\left\{ \left[ x_{i},x^{\ast }\right] :i=1,\cdots
,k\right\} \cup \left\{ \left[ x^{\ast },y_{j}\right] :j=1,\cdots ,\ell
\right\} ,
\end{eqnarray*}%
and
\[
w\left( \left[ x_{i},x^{\ast }\right] \right) =m_{i}, 
w\left( \left[ x^{\ast },y_{j}\right] \right) =n_{j}
\]
for each $i$ and $j$. In this case, each $g_{ij}$ is the union of two edges $%
\left[ x_{i},x^{\ast }\right] \cup \left[ x^{\ast },y_{j}\right] $. Then,
each transport plan $q\in Plan\left( \mathbf{a,b}\right) $ is compatible
with $\bar{G}$ because
\begin{equation*}
\sum_{\left[ x_{i^{\ast }},x^{\ast }\right] \subseteq
g_{ij}}q_{ij}=\sum_{j=1}^{\ell }q_{i^{\ast }j}=m_{i^{\ast }}=w\left( \left[
x_{i^{\ast }},x^{\ast }\right] \right)
\end{equation*}%
and
\begin{equation*}
\sum_{\left[ x^{\ast },y_{j^{\ast }}\right] \subseteq
g_{ij}}q_{ij}=\sum_{i=1}^{k}q_{ij^{\ast }}=n_{j^{\ast }}=w\left( \left[
x^{\ast },y_{j^{\ast }}\right] \right) .
\end{equation*}
\end{example}

\section{Exchange Value Of A Transport system}

In a transport system, a transporter can simply ship the desired bundle to
consumers as they have initially planned. This is a universal strategy.
However, we will see that allowing the exchange of goods between consumers
may make them better off without incurring any additional transportation
cost. In other words, there is an \textit{exchange value} embedded in some
transport system.

\subsection{Exchange Value}

For each probability measure $q=\left( q_{ij}\right) \in \mathcal{P}\left(
X\times X\right) $, we define

\begin{equation}
S\left( q\right) =\sum_{j=1}^{\ell }e_{j}\left( p_{j},u_{j}\left(
q_{j}\right) \right) =\sum_{j=1}^{\ell }\min \left\{ p_{j}\cdot t_{j}\text{ }%
|\text{ }t_{j}\in \mathbb{R}_{+}^{k},\text{ }u_{j}\left( t_{j}\right) \geq
u_{j}\left( q_{j}\right) \right\} ,  \label{S_function}
\end{equation}%
where $q_{j}=\left( q_{1j},q_{2j},...,q_{kj}\right) $ for each $j=1,\cdots
,\ell $. Here, $S\left( q\right) $ represents the least total expenditure
for each individual $j$ to reach utility level $u_{j}\left( q_{j}\right) .$
One can simply use Lemmas \ref{Properties_e} and \ref{homogeneous} to prove
the following lemma which shows several properties of this function $S.$

\begin{lemma}
\label{Properties_S}Suppose each $u_{j}$ is continuous, concave, and
increasing on $\mathbb{R}_{+}^{k},$ $j=1,...,\ell .$ The function $S\left(
q\right) $ is

\begin{enumerate}
\item Homogeneous of degree one in $p=\left( p_{1},...,p_{\ell }\right) .$

\item Increasing in $q$ and nondecreasing in $p_{ij}$ for any $i=1,...,k,$ $%
j=1,...,\ell .$

\item Concave in $p.$

\item Continuous in $p$ and $q.$
\end{enumerate}
\end{lemma}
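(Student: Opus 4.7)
The plan is to verify each of the four properties by viewing $S(q) = \sum_{j=1}^{\ell} e_j(p_j, u_j(q_j))$ as a finite sum of scalar building blocks, so that every property of $S$ reduces to the corresponding property of $e_j$ in Lemma \ref{Properties_e} combined with continuity/monotonicity of $u_j$. I would handle them in the stated order, since parts 1, 2, and 4 are essentially immediate and part 3 requires the only real thought.

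For homogeneity, I would scale $p \mapsto \lambda p$ for $\lambda > 0$: this scales each slice $p_j$ by $\lambda$, and part 1 of Lemma \ref{Properties_e} delivers $e_j(\lambda p_j, u_j(q_j)) = \lambda\, e_j(p_j, u_j(q_j))$; summing over $j$ gives $S(\lambda p, q) = \lambda S(p,q)$. For monotonicity in $q$, a componentwise increase of $q$ forces componentwise increases of each $q_j$, hence increases of $u_j(q_j)$ by monotonicity of $u_j$, hence increases of the summands by part 2 of Lemma \ref{Properties_e}; monotonicity in each $p_{ij}$ falls out of the same part applied to the single summand that depends on $p_j$. Continuity is even more routine: part 4 of Lemma \ref{Properties_e} gives joint continuity of $e_j$ in $(p_j, \tilde{u}_j)$, so composing with the continuous map $q_j \mapsto u_j(q_j)$ and taking a finite sum preserves continuity in $(p,q)$.

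The one step worth a moment of care is concavity in $p=(p_1,\ldots,p_\ell)\in \mathbb{R}_{++}^{k\ell}$, since each $e_j$ is concave only in its own slice $p_j$ rather than in the full vector $p$. The resolution is that concavity is separable across consumers: writing $p = \lambda p^{(0)} + (1-\lambda)p^{(1)}$ componentwise forces $p_j = \lambda p_j^{(0)} + (1-\lambda) p_j^{(1)}$ for every $j$, so part 3 of Lemma \ref{Properties_e} gives a per-consumer inequality which sums over $j$ to concavity of $S$ in $p$.

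I do not expect any genuine obstacle. The lemma is essentially a bookkeeping exercise that lifts Lemma \ref{Properties_e} through a finite sum and through the continuous composition with $u_j$; the only place where one must pause is to confirm that the slice-wise concavity of each $e_j$ in $p_j$ is the right hypothesis to yield joint concavity of $S$ in the concatenated vector $p$.
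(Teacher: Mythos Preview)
Your proposal is correct and follows exactly the approach the paper indicates: the paper simply states that the lemma follows from Lemma~\ref{Properties_e} (and cites Lemma~\ref{homogeneous}, though that one is not actually needed here since no homogeneity of $u_j$ is assumed). Your write-up is in fact more detailed than the paper's own treatment, and your observation that slice-wise concavity in each $p_j$ sums to joint concavity in the concatenated vector $p$ is precisely the one point worth making explicit.
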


Let $\bar{q}\in Plan\left( \mathbf{a,b}\right) $ be the initial plan given
by (\ref{q_bar}). Denote
\begin{equation}
\Omega \left( \bar{q}\right) =\left\{ G\in Path\left( \mathbf{a,b}\right)
\text{ }|\text{ }\left( G,\bar{q}\right) \text{ is compatible}\right\} .
\label{Omega}
\end{equation}

Let $G\in \Omega \left( \bar{q}\right) $ be fixed and $g=\left(
g_{ij}\right) $ be the corresponding matrix of $G$ as given in (\ref%
{g_matrix}). That is,
\begin{equation*}
G=g\cdot \bar{q}.
\end{equation*}%
Then, we introduce the following definition:

\begin{definition}
Each transport plan in the set%
\begin{equation}
\mathcal{F}_{G}=\left\{ q\in \mathcal{P}\left( X\times X\right) \left|
\begin{array}{c}
q\text{ is compatible with }G \\
u_{j}\left( q_{j}\right) \geq u_{j}\left( \bar{q}_{j}\right) ,\text{ }%
j=1,...,\ell .%
\end{array}%
\right. \right\}  \label{feasible}
\end{equation}%
is called a feasible plan for $G.$
\end{definition}

Recall that $q$ is compatible with $G$ means that
\begin{equation}
q_{ij}=0\text{ if }g_{ij}\text{ does not exist}  \label{zero_g_ij}
\end{equation}%
and
\begin{equation*}
g\cdot q=g\cdot \bar{q},
\end{equation*}%
in the sense that for each edge $e\in E\left( G\right) $, we have an
equality
\begin{equation}
\sum_{e\subseteq g_{ij}}q_{ij}=w\left( e\right) \text{, where }w\left(
e\right) =\sum_{e\subseteq g_{ij}}\bar{q}_{ij}.  \label{edge_equation}
\end{equation}%
For any feasible plan $q\in \mathcal{F}_{G}$, the constraint $u_{j}\left(
q_{j}\right) \geq u_{j}\left( \bar{q}_{j}\right) $ means that $q_{j}$ is at
least as good as $\bar{q}_{j}$ for each consumer $j$.

Since $\bar{q}\in Plan\left( \mathbf{a,b}\right) $, the compatibility
condition automatically implies that $q\in Plan\left( \mathbf{a,b}\right) $
whenever $q\in \mathcal{F}_{G}$.

\begin{lemma}
$\mathcal{F}_{G}$ is a nonempty, convex and compact subset of $\mathcal{P}%
\left( X\times X\right) .$
\end{lemma}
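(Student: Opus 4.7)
The plan is to verify the three properties separately, treating $\mathcal{F}_G$ as a subset of the finite-dimensional space of atomic measures supported on $\{(x_i,y_j)\}_{i,j}$, so that compactness reduces to closed and bounded in $\mathbb{R}^{k\ell}$.

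First, for nonemptiness, I would simply exhibit $\bar{q}$ itself as an element of $\mathcal{F}_G$: by hypothesis $G\in\Omega(\bar{q})$, which means $(G,\bar{q})$ is compatible, and the utility constraint $u_j(\bar{q}_j)\ge u_j(\bar{q}_j)$ is trivial.

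Second, for convexity, take $q^{(1)},q^{(2)}\in\mathcal{F}_G$ and $\lambda\in[0,1]$, and consider $q=\lambda q^{(1)}+(1-\lambda)q^{(2)}$. The zero constraint (\ref{zero_g_ij}) is preserved since $q_{ij}=0$ whenever both $q^{(1)}_{ij}=q^{(2)}_{ij}=0$. The edge equation (\ref{edge_equation}) is linear in $q_{ij}$, hence a convex combination of two solutions is again a solution. For the utility constraint, I invoke concavity of each $u_j$:
\begin{equation*}
u_j(q_j)=u_j(\lambda q^{(1)}_j+(1-\lambda)q^{(2)}_j)\ge \lambda u_j(q^{(1)}_j)+(1-\lambda)u_j(q^{(2)}_j)\ge u_j(\bar{q}_j).
\end{equation*}
This is the only place where concavity of $u_j$ enters, and it is essential — without it, convexity could fail.

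Third, for compactness, I identify $q\in\mathcal{P}(X\times X)$ of the form (\ref{transport_plan}) with the vector $(q_{ij})\in\mathbb{R}_+^{k\ell}$. The support is fixed (a finite set), so the weak-$*$ topology on the relevant measures coincides with the Euclidean topology on the coefficients. Boundedness is immediate since each $q_{ij}\in[0,1]$ (as $q$ is a probability measure). For closedness, note that the compatibility conditions (\ref{zero_g_ij}) and (\ref{edge_equation}) are finitely many linear equalities, and each utility constraint $u_j(q_j)\ge u_j(\bar{q}_j)$ defines a closed set by continuity of $u_j$. A finite intersection of closed sets in $\mathbb{R}^{k\ell}$ is closed, and closed plus bounded gives compact.

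No real obstacle arises here; the only substantive ingredient is invoking concavity of $u_j$ for convexity and continuity of $u_j$ for closedness, both of which are part of the standing hypotheses on the utility functions.
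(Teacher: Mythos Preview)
Your proof is correct and follows essentially the same approach as the paper: nonemptiness via $\bar{q}$, convexity via concavity of the $u_j$, and compactness via continuity of the $u_j$ giving closedness inside the compact simplex. Your version is slightly more explicit in identifying the measures with points in $\mathbb{R}^{k\ell}$ and invoking closed-plus-bounded, whereas the paper phrases convexity as an intersection of two convex sets and compactness as closedness in the compact space $\mathcal{P}(X\times X)$, but the substance is identical.
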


\begin{proof}
Clearly, $\bar{q}\in \mathcal{F}_{G},$ showing that $\mathcal{F}_{G}\neq
\emptyset .$ The set $\mathcal{F}_{G}$ is convex since it is an intersection
of two convex sets $\left\{ q\in \mathcal{P}\left( X\times X\right) \text{ }|%
\text{ }g\cdot q=G\text{ }\right\} $ and $\prod\nolimits_{j=1}^{\ell
}\left\{ q_{j}\in \mathcal{P}\left( X\times X\right) \left| u_{j}\left(
q_{j}\right) \geq u_{j}\left( \bar{q}_{j}\right) \right. \right\} ,$ where
the convexity of $\left\{ q_{j}\in \mathcal{P}\left( X\times X\right) \left|
u_{j}\left( q_{j}\right) \geq u_{j}\left( \bar{q}_{j}\right) \right.
\right\} $ comes from the concavity of $u_{j}$, $j=1,...,\ell .$ Since each $%
u_{j}$ is continuous, we have $\mathcal{F}_{G}$ is a closed subset of $%
\mathcal{P}\left( X\times X\right) $ and hence it is compact.
\end{proof}

Note that when $G=\bar{G}$ as constructed in the example \ref{G_bar}, we
have
\begin{equation*}
\mathcal{F}_{\bar{G}}=\left\{ q\in Plan\left( \mathbf{a,b}\right) \left|
u_{j}\left( q_{j}\right) \geq u_{j}\left( \bar{q}_{j}\right) ,\text{ }%
j=1,...,\ell \right. \right\} .
\end{equation*}%
Clearly, for each $G\in Path\left( \mathbf{a,b}\right) $, we have
\begin{equation}
\bar{q}\in \mathcal{F}_{G}\subseteq \mathcal{F}_{\bar{G}}.
\label{comparison_G_bar}
\end{equation}

\begin{definition}
Let $\mathcal{E}$ be an economy as in (\ref{economy}). For each transport
path $G\in \Omega \left( \bar{q}\right) $, we define the exchange value of $%
G $ by
\begin{equation}
\mathcal{V}\left( G;\mathcal{E}\right) =\underset{q\in \mathcal{F}_{G}}{\max
}\text{ }S\left( q\right) -S\left( \bar{q}\right) ,  \label{main_problem}
\end{equation}%
where $S$ is given by (\ref{S_function}). Without causing confusion, we may
simply denote $\mathcal{V}\left( G;\mathcal{E}\right) $ by $\mathcal{V}%
\left( G\right) $.
\end{definition}

Since $S$ is a continuous function on a compact set, the exchange value
function $\mathcal{V}:\Omega \rightarrow \lbrack 0,\infty )$ is well
defined. Furthermore, for each $q\in \mathcal{F}_{G}$, given $u_{j}\left(
q_{j}\right) \geq u_{j}\left( \bar{q}_{j}\right) $ for all $j$, we have
\begin{equation}
S\left( q\right) \geq S\left( \bar{q}\right) .  \label{S_q_comparison}
\end{equation}

\begin{remark}
Our way of defining the feasibility set $\mathcal{F}_{G}$ guarantees that
the exchange value is not obtained at the cost of increasing transportation
cost. This is because the compatibility condition ensures that replacing $%
\bar{q}$ by any feasible plan $q\in \mathcal{F}_{G}$ will not change the
transportation cost $M_{\alpha }\left( G\right) $ (to be defined later in (%
\ref{M_a_cost})), as the quantity on each edge $e$ of $G$ is set to be $%
w\left( e\right) $.
\end{remark}

The following proposition shows that the exchange value is always
nonnegative and bounded from above.

\begin{proposition}
\label{upperbound}For any $G\in \Omega \left( \bar{q}\right) ,$%
\begin{equation*}
0\leq \mathcal{V}\left( G\right) \leq \mathcal{V}\left( \bar{G}\right) .
\end{equation*}
\end{proposition}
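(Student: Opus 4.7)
My plan is to read off both inequalities directly from what has already been established in the excerpt, so the ``proof'' is really a short two-line bookkeeping exercise; the only substantive observation is that all the groundwork has been laid.

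For the lower bound $0 \leq \mathcal{V}(G)$, I would simply quote inequality (\ref{S_q_comparison}): since every $q \in \mathcal{F}_G$ satisfies $u_j(q_j) \geq u_j(\bar{q}_j)$ and $e_j$ is strictly increasing in its utility argument (Lemma \ref{Properties_e}(2)), summing over $j$ gives $S(q) \geq S(\bar{q})$. Taking the maximum over $q \in \mathcal{F}_G$ (which is nonempty since $\bar{q} \in \mathcal{F}_G$) yields $\max_{q \in \mathcal{F}_G} S(q) \geq S(\bar{q})$, i.e.\ $\mathcal{V}(G) \geq 0$. Alternatively, one can just note that plugging $q = \bar{q}$ into the maximization gives the value $S(\bar{q}) - S(\bar{q}) = 0$, so the supremum is at least $0$.

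For the upper bound $\mathcal{V}(G) \leq \mathcal{V}(\bar{G})$, the key is the inclusion $\mathcal{F}_G \subseteq \mathcal{F}_{\bar{G}}$ recorded in (\ref{comparison_G_bar}). Since we are maximizing the same functional $S$ over a smaller set on the left and a larger set on the right, the monotonicity of $\max$ with respect to the feasible set gives
\begin{equation*}
\max_{q \in \mathcal{F}_G} S(q) \leq \max_{q \in \mathcal{F}_{\bar{G}}} S(q).
\end{equation*}
Subtracting $S(\bar{q})$ from both sides gives exactly $\mathcal{V}(G) \leq \mathcal{V}(\bar{G})$.

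There is no real obstacle here; the proposition is essentially a corollary of the set inclusion (\ref{comparison_G_bar}) and the monotonicity property (\ref{S_q_comparison}). The one point worth double-checking is that the maxima are attained (so that $\mathcal{V}$ is well defined as a max rather than a sup), but this has already been noted in the paragraph immediately after the definition of $\mathcal{V}$: $S$ is continuous (Lemma \ref{Properties_S}(4)) and $\mathcal{F}_G$, $\mathcal{F}_{\bar{G}}$ are nonempty compact sets by the preceding lemma, so the extreme value theorem applies.
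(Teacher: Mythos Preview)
Your proof is correct and follows exactly the same approach as the paper, which simply states that the result ``follows from the definition as well as (\ref{comparison_G_bar}).'' You have merely unpacked in detail the two ingredients the paper cites: the definition (giving $\mathcal{V}(G)\geq 0$ via $\bar q\in\mathcal{F}_G$ or (\ref{S_q_comparison})) and the inclusion (\ref{comparison_G_bar}) (giving $\mathcal{V}(G)\leq\mathcal{V}(\bar G)$ by monotonicity of the maximum in the feasible set).
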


\begin{proof}
This follows from the definition as well as (\ref{comparison_G_bar}).
\end{proof}

\begin{example}
Let's return to the example discussed in introduction. More precisely,
suppose $u_{1}\left( q_{11},q_{21}\right) =q_{11}+3q_{21},$ $w_{1}=1/2,$ $%
p_{1}=\left( 1,6\right) $ and $u_{2}\left( q_{12},q_{22}\right)
=3q_{12}+q_{22},$ $w_{2}=1/2,$ $p_{2}=\left( 6,1\right) .$ By solving (\ref%
{q_bar}), i.e.
\begin{eqnarray*}
\bar{q}_{1} &\in &\arg \max \left\{ u_{1}\left( q_{11},q_{21}\right) \text{ }%
|\text{ } p_{1}\cdot q_{1}\leq w_{1}\right\} \\
&=&\arg \max \left\{ q_{11}+3q_{21} \text{ }|\text{ } q_{11}+6q_{21}\leq
1/2\right\} \\
&=&\left\{ \left( 1/2,0\right) \right\} ,
\end{eqnarray*}%
we find \ $\bar{q}_{1}=\left( 1/2,0\right) $. Similarly, we have $\bar{q}%
_{2}=\left( 0,1/2\right) $. This gives the initial plan
\begin{equation*}
\bar{q}=\left(
\begin{array}{cc}
1/2 & 0 \\
0 & 1/2%
\end{array}%
\right) .
\end{equation*}%
Now, solving expenditure minimization problems (\ref{ex_min}) yields
\begin{eqnarray*}
e_{1}\left( p_{1},\tilde{u}_{1}\right) &=&\min \left\{ p_{1}\cdot q_{1}\text{
}|\text{ }q_{1}\in \mathbb{R}_{+}^{2},u_{1}\left( q_{1}\right) \geq \tilde{u}%
_{1}\right\} \\
&=&\min \left\{ q_{11}+6q_{21}\text{ }|\text{ }\left( q_{11},q_{21}\right)
\in \mathbb{R}_{+}^{2},q_{11}+3q_{21}\geq \tilde{u}_{1}\right\} \\
&=&\tilde{u}_{1}.
\end{eqnarray*}%
Similarly, we have $e_{2}\left( p_{2},\tilde{u_{2}}\right) =\tilde{u_{2}}$.
From these, we get
\begin{equation}
S\left( q\right) =e_{1}\left( p_{1},u_{1}\left( q_{1}\right) \right)
+e_{2}\left( p_{2},u_{2}\left( q_{2}\right) \right) =u_{1}\left(
q_{1}\right) +u_{2}\left( q_{2}\right)  \notag
\end{equation}%
for each probability measure $q\in \mathcal{P}\left( X\times X\right) $.
Now, we find the exchange value embedded in the transport systems $G_{1}$
and $G_{2}$ as given in Figure 1.

\begin{itemize}
\item $G_{1}:$ The associated feasible set is%
\begin{equation*}
\mathcal{F}_{G_{1}}=\left\{ q=\left(
\begin{array}{cc}
q_{11} & q_{12} \\
q_{21} & q_{22}%
\end{array}%
\right) \in \mathcal{P}\left( X\times X\right) \left|
\begin{array}{c}
q_{11}=1/2,\text{ }q_{21}=0,\text{ }q_{12}=0,\text{ }q_{22}=1/2, \\
q_{11}+3q_{21}\geq u_{1}\left( \bar{q}_{1}\right) =1/2, \\
\text{ }3q_{12}+q_{22}\geq u_{2}\left( \bar{q}_{2}\right) =1/2.%
\end{array}%
\right. \right\} =\left\{ \bar{q}\right\} .
\end{equation*}%
Thus, the exchange value of $G_{1}$ is
\begin{equation*}
\mathcal{V}\left( G_{1}\right) =\underset{q\in \mathcal{F}_{G_{1}}}{\max }%
S\left( q\right) -S\left( \bar{q}\right) =S\left( \bar{q}\right) -S\left(
\bar{q}\right) =0.
\end{equation*}

\item $G_{2}:$ The associated feasible set is%
\begin{eqnarray*}
\mathcal{F}_{G_{2}} &=&\left\{ q=\left(
\begin{array}{cc}
q_{11} & q_{12} \\
q_{21} & q_{22}%
\end{array}%
\right) \in \mathcal{P}\left( X\times X\right) \left|
\begin{array}{c}
q_{11}+q_{12}=1/2,q_{21}+q_{22}=1/2, \\
q_{11}+q_{21}=1/2, \\
q_{11}+3q_{21}\geq u_{1}\left( \bar{q}_{1}\right) =1/2, \\
\text{ }3q_{12}+q_{22}\geq u_{2}\left( \bar{q}_{2}\right) =1/2.%
\end{array}%
\right. \right\} \\
&=&\left\{ q=\left(
\begin{array}{cc}
q_{11} & 1/2-q_{11} \\
1/2-q_{11} & q_{11}%
\end{array}%
\right) \left|
\begin{array}{c}
q_{11}\leq 1/2 \\
\text{ }q_{11}\geq 0%
\end{array}%
\right. .\right\}
\end{eqnarray*}%
Thus, we have the following exchange value
\begin{eqnarray*}
\mathcal{V}\left( G_{2}\right) &=&\underset{q\in \mathcal{F}_{G_{2}}}{\max }%
S\left( q\right) -S\left( \bar{q}\right) \\
&=&\underset{q\in \mathcal{F}_{G_{2}}}{\max }\left\{ \left(
q_{11}+3q_{21}\right) +\left( 3q_{12}+q_{22}\right) \right\} -1 \\
&=&\underset{0\leq q_{11}\leq \frac{1}{2}}{\max }\left\{ \left(
q_{11}+3\left( 1/2-q_{11}\right) \right) +\left( 3\left( 1/2-q_{11}\right)
+q_{11}\right) \right\} -1 \\
&=&\underset{0\leq q_{11}\leq \frac{1}{2}}{\max }\left\{ 3-4q_{11}\right\}
-1=2.
\end{eqnarray*}
\end{itemize}
\end{example}

Basically, there are three factors affecting the size of exchange value:
transport structures, preferences and prices. In the rest of this section,
we will study how these three factors affect the exchange value.

\subsection{ Transport Structures and Exchange Value}

For any $G\in \Omega \left( \bar{q}\right) $, define
\begin{equation*}
K\left( \bar{q},G\right) =\left\{ q\in \mathcal{P}\left( X\times X\right)
\left| q\text{ is compatible with }G\right. \right\} ,
\end{equation*}%
and
\begin{equation*}
U\left( \bar{q}\right) =\left\{ q\in \mathcal{P}\left( X\times X\right)
\left| u_{j}\left( q_{j}\right) \geq u_{j}\left( \bar{q}_{j}\right) ,\text{ }%
j=1,...,\ell .\right. \right\}
\end{equation*}%
Then,
\begin{equation*}
\mathcal{F}_{G}=K\left( \bar{q},G\right) \cap U\left( \bar{q}\right) .
\end{equation*}%
Clearly, the structure of a transport system influences the exchange value
through $K\left( \bar{q},G\right) .$ For this consideration, this subsection
will focus on the properties of $K\left( \bar{q},G\right) $ whose
implications on exchange value will be self-evident in the following
subsections.

\begin{proposition}
$K\left( \bar{q},G\right) $ is a polygon of dimension $N\left( G\right)
+\chi \left( G\right) -\left( k+\ell \right) $, where $\chi \left( G\right) $
is the Euler Characteristic number of $G$, and $N\left( G\right) $ is the
total number of existing $g_{ij}$'s in $G$.
\end{proposition}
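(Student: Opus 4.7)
The plan is to realize $K(\bar q, G)$ as the intersection of $|E(G)|$ affine hyperplanes in $\mathbb{R}^{N(G)}$ with the nonnegative orthant, verify that it is a nonempty bounded polytope, and compute its dimension from the rank of the edge-constraint matrix.

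First I would note that $q\in K(\bar q,G)$ if and only if the $N(G)$ coordinates $q_{ij}$ (indexed by existing $g_{ij}$) satisfy $q_{ij}\ge 0$ together with the edge equations (\ref{edge_equation}) $\sum_{e\subseteq g_{ij}}q_{ij}=w(e)$ for each $e\in E(G)$. Summing the equations for edges leaving a source yields $\sum_j q_{ij}=m_i$, so $K(\bar q,G)$ lies in the probability simplex and is bounded; together with $\bar q\in K(\bar q,G)$ this makes it a nonempty convex polytope. Its dimension equals $N(G)-\operatorname{rank}(A)$, where $A\in\{0,1\}^{|E(G)|\times N(G)}$ is defined by $A_{e,(i,j)}=\mathbf{1}_{e\subseteq g_{ij}}$.

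To compute $\operatorname{rank}(A)$ I would describe its left kernel. For each internal vertex $v\in V(G)\setminus(\{x_1,\dots,x_k\}\cup\{y_1,\dots,y_\ell\})$, set $(d_v)_e=+1$ if $e^+=v$, $-1$ if $e^-=v$, and $0$ otherwise. A path $g_{ij}$ passing through $v$ as an interior vertex enters via one edge and leaves via another, pairing with $d_v$ to give $0$; paths avoiding $v$ pair to $0$ trivially; and no $g_{ij}$ has $v$ as an endpoint. Hence $d_v^T A=0$, producing $|V(G)|-k-\ell$ explicit elements of $\ker A^T$.

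The main obstacle is to prove that $\{d_v\}$ is a basis of $\ker A^T$. For linear independence, a relation $\sum_v\beta_v d_v=0$ reads $\beta_{e^+}=\beta_{e^-}$ along every edge (using the convention $\beta_v=0$ for $v\in\{x_1,\dots,x_k,y_1,\dots,y_\ell\}$); since every internal vertex lies on some $g_{ij}$ and is hence $G$-connected to a source, $\beta\equiv 0$. For spanning, given $\alpha\in\ker A^T$ I would set $\phi(x_i)=0$ for every source and $\phi(v)=\sum_{e\in\gamma}\alpha_e$ for any directed $G$-path $\gamma$ from some source to $v$; well-definedness and the vanishing $\phi(y_j)=0$ both follow by realizing any two such paths as subpaths of the same $g_{ij}$ (after being extended to reach a common sink) and invoking $\sum_{e\subseteq g_{ij}}\alpha_e=0$, where the uniqueness of each $g_{ij}$ is essential. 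Then $\alpha_e=\phi(e^+)-\phi(e^-)$, i.e., $\alpha=\sum_v\phi(v)\,d_v$. Consequently $\operatorname{rank}(A)=|E(G)|-(|V(G)|-k-\ell)$, and since $\chi(G)=|V(G)|-|E(G)|$,
\[
\dim K(\bar q,G)=N(G)+\chi(G)-(k+\ell).
\]
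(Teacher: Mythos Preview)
Your approach is correct and genuinely different from the paper's. The paper obtains the lower bound $\dim K(\bar q,G)\ge N(G)+\chi(G)-(k+\ell)$ by observing that the balance equation at each interior vertex makes one edge-equation redundant (this is precisely your relation $d_v^{\,T}A=0$), and then proves the matching upper bound separately in Lemma~\ref{dimension_less} by an induction on the number $k$ of sources, peeling off the edges that lie only on paths $g_{(k+1)j}$ and using a block-triangular form of the constraint matrix. You instead compute $\operatorname{rank}(A)$ in one stroke by identifying $\ker A^{T}$ exactly: the vectors $d_v$ at interior vertices are not only in the left kernel but span it, via the potential function $\phi$. This is more conceptual and yields both inequalities simultaneously, whereas the paper's inductive argument is more hands-on but perhaps more robust to structural irregularities of $G$.

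One point to make explicit in your write-up: the well-definedness of $\phi$ (and hence the spanning step) uses that the concatenation of a directed path from a source to $v$ with a directed path from $v$ to a sink is again a \emph{simple} directed path, so that the uniqueness hypothesis on the $g_{ij}$ can be invoked. This amounts to assuming that $G$ has no directed cycles, which is natural for transport paths and is tacitly used in the paper's own proof as well (for instance when asserting that the auxiliary graph $\tilde G$ in Lemma~\ref{dimension_less} is contractible). Stating this assumption would make your argument airtight.
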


\begin{proof}
For each interior vertex $v$ of $G$, let $\left\{ e_{1},e_{2},\cdots
,e_{h}\right\} \subseteq E\left( G\right) $ be the set of edges with $%
e_{i}^{-}=v$. Then, each $e_{i}$ corresponds to an equation of the form (\ref%
{edge_equation}). Nevertheless, due to the balance equation (\ref{balance}),
we may remove one redundant equation from these $h$ equations. As a result,
the total number of equations of the form (\ref{edge_equation}) equals the
total number of edges of $G$ minus the total number of interior vertices of $%
G$. Thus, $K\left( \bar{q},G\right) $ is defined by $k+\ell -\chi \left(
G\right) $ number of linear equations in the form of (\ref{edge_equation}),
and $\left( k\ell -N\left( G\right) \right) $ number of equations (\ref%
{zero_g_ij}). This shows that $K\left( \bar{q},G\right) $ is a convex
polygon of dimension
\begin{equation}
\dim \left( K\left( \bar{q},G\right) \right) \geq k\ell -\left( k+\ell -\chi
\left( G\right) \right) -\left( k\ell -N\left( G\right) \right) =N\left(
G\right) +\chi \left( G\right) -\left( k+\ell \right) .  \label{dim1}
\end{equation}%
By the following Lemma \ref{dimension_less}, we have an inequality of the
other direction.
\end{proof}

\begin{lemma}
\label{dimension_less}The dimension of $K\left( \bar{q},G\right) $ is no
more than $N\left( G\right) +\chi \left( G\right) -\left( k+\ell \right) $.
\end{lemma}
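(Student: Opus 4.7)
Plan: I would prove the reverse inequality to the count already carried out for the preceding proposition. Recall that $K(\bar q,G)$ is cut out of $\mathbb R^{k\ell}$ by the $k\ell-N(G)$ equations (\ref{zero_g_ij}) together with the $E(G)$ edge equations (\ref{edge_equation}). The first block involves only coordinates $q_{ij}$ with $g_{ij}$ absent and the second block only coordinates with $g_{ij}$ existing, so the two blocks are automatically mutually independent. What remains, therefore, is to show that the edge equations, viewed as linear functionals on $\mathbb R^{N(G)}$, have rank at least $E(G)-|V_{int}(G)|=k+\ell-\chi(G)$, where $V_{int}(G)$ denotes the set of interior vertices of $G$.

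Let $D\subseteq\mathbb R^{E(G)}$ be the left null space of the edge equations, i.e.\ the set of $c=(c_e)$ with $\sum_{e\subseteq g_{ij}}c_e=0$ for every existing pair $(i,j)$. The proof of the previous proposition already exhibited $|V_{int}(G)|$ linearly independent elements of $D$, namely the balance dependencies at each interior vertex, so $\dim D\ge|V_{int}(G)|$. For the matching upper bound, given $c\in D$, I would build a potential $\phi\colon V(G)\to\mathbb R$ that vanishes on every boundary vertex $x_i,y_j$ and satisfies $c_e=\phi(e^-)-\phi(e^+)$ for every edge $e\in E(G)$. Concretely, for an interior vertex $v$ I would set $\phi(v)=\sum_{e\in\eta}c_e$, where $\eta$ is any initial sub-segment of an existing $g_{ij}$ running from $x_i$ to $v$ inside $G$; such an $\eta$ exists because $G=\bar q\cdot g$ forces every vertex of $G$ to lie on some existing $g_{ij}$. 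Once $\phi$ is well defined, $c$ is entirely determined by the $|V_{int}(G)|$ real numbers $\phi|_{V_{int}(G)}$, giving $\dim D\le|V_{int}(G)|$, so the edge equations have rank exactly $E(G)-|V_{int}(G)|=k+\ell-\chi(G)$. Combining the two blocks, the defining linear system has total rank $(k\ell-N(G))+(k+\ell-\chi(G))$, which yields
\[
\dim K(\bar q,G)\le k\ell-(k\ell-N(G))-(k+\ell-\chi(G))=N(G)+\chi(G)-(k+\ell),
\]
as required.

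The main obstacle is verifying that $\phi(v)$ is independent of the choice of initial segment $\eta$. If $v$ lies on both $g_{i_1j_1}$ and $g_{i_2j_2}$ with respective initial segments $\eta_1,\eta_2$ from the two sources, the two candidate values $\sum_{\eta_1}c_e$ and $\sum_{\eta_2}c_e$ must be shown to agree. I would do this by splicing: replace the tail of $g_{i_1j_1}$ past $v$ with that of $g_{i_2j_2}$ (and conversely), producing directed curves from $x_{i_1}$ to $y_{j_2}$ and from $x_{i_2}$ to $y_{j_1}$. By the ``at most one $g_{ij}$'' hypothesis, each such spliced curve must coincide with $g_{i_1j_2}$ or $g_{i_2j_1}$ whenever those exist, and applying the telescoping identity $\sum_{e\in g_{ij}}c_e=0$ to all four paths forces $\sum_{\eta_1}c_e=\sum_{\eta_2}c_e$. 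The genuinely delicate point is the case when one of the spliced pairs is missing from the list of existing curves (as allowed by Figure~\ref{simple_graph}); here I would iteratively interpose auxiliary index pairs to bridge the two partial sums, invoking the fact that $G$ is exhausted by its existing $g_{ij}$'s together with the uniqueness clause to propagate the telescoping identities across the missing pair. Making this splicing argument rigorous, under the full generality of allowing some $g_{ij}$ to be empty, is the technical heart of the proof.
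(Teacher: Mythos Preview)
Your approach is genuinely different from the paper's. The paper proves the rank lower bound $\mathrm{rank}(A)\ge k+\ell-\chi(G)$ by \emph{induction on the number of sources $k$}: it splits $E(G^{(k+1)})$ into the edges lying on some $g_{ij}$ with $i\le k$ and the remaining edges, obtains a block-triangular form
\[
A^{(k+1)}=\begin{pmatrix}A^{(k)} & B^{(k+1)}\\ 0 & C^{(k+1)}\end{pmatrix},
\]
recognises $C^{(k+1)}$ as the $A^{(1)}$-matrix of a contractible single-source subgraph $\tilde G$, and adds ranks. No potential function ever appears. Your left-null-space/potential argument is more conceptual and, when it works, gives the stronger statement that $D$ is \emph{exactly} spanned by the vertex-balance relations; the paper's induction, by contrast, is purely combinatorial and needs no geometric interpretation of the dependencies.

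That said, your proposal has a real gap precisely where you flag it. Under the paper's convention, $g_{ij}$ is by definition \emph{the} directed simple path in $G$ from $x_i$ to $y_j$; it is declared $0$ only when no such path exists. If you add the (implicit) hypothesis that $G$ is acyclic, then whenever $v\in g_{i_1j_1}\cap g_{i_2j_2}$ the concatenation $\eta_1+\tau_2$ is automatically simple (a repeated vertex would yield a directed cycle), hence a simple path from $x_{i_1}$ to $y_{j_2}$ exists, hence $g_{i_1j_2}$ exists and equals $\eta_1+\tau_2$. In that case your telescoping identities go through in one step and the ``missing pair'' scenario you worry about cannot occur; the iterative interposition you sketch is unnecessary. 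Without acyclicity, however, the spliced walk may fail to be simple, $g_{i_1j_2}$ may be a different curve not passing through $v$, and your well-definedness argument genuinely breaks down. You have not isolated the acyclicity hypothesis, and your fallback (``iteratively interpose auxiliary index pairs'') is not an argument but a hope; it is not clear how to bridge two partial sums through a chain of existing $g_{ij}$'s in general. (There is also a harmless sign slip: with your definition of $\phi$ one gets $c_e=\phi(e^+)-\phi(e^-)$, not $\phi(e^-)-\phi(e^+)$.)

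In short: your method can be made to work once you assume $G$ is acyclic and observe that then every spliced curve is itself an existing $g_{ij}$, eliminating the ``delicate case'' entirely; but as written the proposal leaves the technical heart unproved. The paper's induction-on-sources argument avoids these path-splicing subtleties altogether.
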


\begin{proof}
Since $K\left( \bar{q},G\right) $ is defined by $k\ell $ variables $\left(
q_{ij}\right) _{k\times \ell }$ which satisfy equations (\ref{zero_g_ij})
and (\ref{edge_equation}). As the number of equations (\ref{zero_g_ij}) is $%
k\ell -N\left( G\right) $, it is sufficient to show that
\begin{equation*}
rank\left( A\right) \geq k\ell -\left( k\ell -N\left( G\right) \right)
-\left( N\left( G\right) +\chi \left( G\right) -\left( k+\ell \right)
\right) =\left( k+\ell -\chi \left( G\right) \right) ,
\end{equation*}%
where $A$ is the coefficient matrix given by linear equations (\ref%
{edge_equation}). We prove this by using induction on the number $k$. When $%
k=1$, then the coefficient matrix $A^{\left( 1\right) }$ is in the form of%
\begin{equation*}
A^{\left( 1\right) }=\left(
\begin{array}{c}
I \\
B%
\end{array}%
\right) ,
\end{equation*}%
where $I$ is the $N\left( G^{\left( 1\right) }\right) \times N\left(
G^{\left( 1\right) }\right) $ identity matrix $I_{N\left( G^{\left( 1\right)
}\right) }$, and $B$ is some matrix of $N\left( G^{\left( 1\right) }\right) $
columns. Thus, the rank of $A^{\left( 1\right) }$ is $N\left( G^{\left(
1\right) }\right) $. On the other hand, the Euler Characteristic number of $%
G^{\left( 1\right) }$ is
\begin{equation*}
\chi \left( G^{\left( 1\right) }\right) =1+\left( \ell -N\left( G^{\left(
1\right) }\right) \right) \text{,}
\end{equation*}%
which gives
\begin{equation}
rank\left( A^{\left( 1\right) }\right) =\left( 1+\ell \right) -\chi \left(
G^{\left( 1\right) }\right) .  \label{rank_1}
\end{equation}%
Now, we may use induction by assuming that
\begin{equation}
rank\left( A^{\left( k\right) }\right) \geq \left( k+\ell \right) -\chi
\left( G^{\left( k\right) }\right)  \label{rank_k}
\end{equation}%
for any $G^{\left( k\right) }$ from $k$ sources to $\ell $ consumers. We
want to show that
\begin{equation*}
rank\left( A^{\left( k+1\right) }\right) \geq \left( k+\ell +1\right) -\chi
\left( G^{\left( k+1\right) }\right)
\end{equation*}%
for any $G^{\left( k+1\right) }$ from $\left( k+1\right) $ sources to $\ell $
consumers.

Let
\begin{eqnarray*}
E_{1}^{\left( k+1\right) } &=&\left\{ e\in E\left( G^{\left( k+1\right)
}\right) :e\subseteq g_{ij}\text{ for some }i\in \left\{ 1,\cdots ,k\right\}
\text{ and }j\in \left\{ 1,\cdots ,\ell \right\} \right\} \\
E_{2}^{\left( k+1\right) } &=&E\left( G^{\left( k+1\right) }\right)
\setminus E_{1}^{\left( k+1\right) }.
\end{eqnarray*}%
For each $e\in E_{2}^{\left( k+1\right) }$, we know $e\subseteq g_{\left(
k+1\right) j}$ for some $j\in \left\{ 1,\cdots ,\ell \right\} $, but $%
e\notin E_{1}^{\left( k+1\right) }$. Then, for each $e\in E_{1}^{\left(
k+1\right) }$, we have
\begin{equation*}
\sum_{\substack{ 1\leq i\leq k,1\leq j\leq \ell  \\ e\subseteq g_{ij}}}%
q_{ij}+\sum_{\substack{ 1\leq j\leq \ell  \\ e\subseteq g_{\left( k+1\right)
j}}}q_{\left( k+1\right) j}=w\left( e\right) .
\end{equation*}%
Also, for each $e\in E_{2}^{\left( k+1\right) }$, we have
\begin{equation*}
\sum_{\substack{ 1\leq j\leq \ell  \\ e\subseteq g_{\left( k+1\right) j}}}%
q_{\left( k+1\right) j}=w\left( e\right) .
\end{equation*}%
As a result, the matrix $A^{\left( k+1\right) }$ can be expressed in the
form
\begin{equation}
A^{\left( k+1\right) }=\left(
\begin{array}{cc}
A^{\left( k\right) } & B^{\left( k+1\right) } \\
0 & C^{\left( k+1\right) }%
\end{array}%
\right) .  \label{A_(k+1)}
\end{equation}%
Now, we consider a new transport path
\begin{equation*}
\tilde{G}=\sum_{e\in E_{2}^{\left( k+1\right) }}w\left( e\right) \left[ e%
\right] \text{.}
\end{equation*}%
from a single source (i.e. $x_{k+1}$) to a few (say $\tilde{\ell}$ ) targets
(, which do not necessarily belong to the original consumers). The matrix $%
C^{\left( k+1\right) }$ here is the associated $A^{\left( 1\right) }$ matrix
for $\tilde{G}$, and thus has rank $\left( 1+\tilde{\ell}\right) -\chi
\left( \tilde{G}\right) =\tilde{\ell}$ as $\tilde{G}$ is contractible. Also,
we have
\begin{equation*}
\chi \left( G^{\left( k+1\right) }\right) =\chi \left( G^{\left( k\right)
}\right) +1-\tilde{\ell}.
\end{equation*}%
Therefore, by (\ref{rank_k}) and (\ref{A_(k+1)}),
\begin{eqnarray*}
rank\left( A^{\left( k+1\right) }\right) &\geq &rank\left( A^{\left(
k\right) }\right) +rank\left( C^{\left( k+1\right) }\right) \\
&\geq &\left( k+\ell \right) -\chi \left( G^{\left( k\right) }\right)
+\left( 1+\chi \left( G^{\left( k\right) }\right) -\chi \left( G^{\left(
k+1\right) }\right) \right) \\
&=&\left( k+1+\ell \right) -\chi \left( G^{\left( k+1\right) }\right) .
\end{eqnarray*}
\end{proof}

\begin{corollary}
\label{corollary_kl}Suppose $G\in \Omega \left( \bar{q}\right) $.
\end{corollary}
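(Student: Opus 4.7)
The plan is to read the corollary directly off the dimension formula $\dim K(\bar{q}, G) = N(G) + \chi(G) - (k+\ell)$ just established in the preceding proposition. Since the two quantities on the right are the only ones sensitive to the structure of $G$, any corollary must amount to combining a bound on $N(G)$ with a bound on $\chi(G)$ to obtain a universal ceiling on the dimension of $K(\bar{q},G)$, with the equality case pinned down by a specific transport path (presumably the hub construction $\bar{G}$ of Example \ref{G_bar}).

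First I would record the trivial bound $N(G) \leq k\ell$, which just says that there are at most $k\ell$ source/target pairs $(x_i, y_j)$ and hence at most $k\ell$ existing polyhedral curves $g_{ij}$. Next I would bound the Euler characteristic by writing, for any finite graph, $\chi(G) = |V(G)| - |E(G)| = c(G) - b_1(G)$, where $c(G)$ is the number of connected components and $b_1(G)$ is the first Betti number. The goal would be to show $\chi(G) \leq 1$. When $G$ is connected this is immediate because $b_1(G) \geq 0$; when $G$ is disconnected one argues that each component is itself a transport path between a subset of the sources and a subset of the targets, and the balance equation (\ref{balance}) prevents any component from being a spurious cycle disjoint from $\{x_i\} \cup \{y_j\}$, so the problem decomposes componentwise without enlarging the total dimension.

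Combining these with the dimension formula gives
\[
\dim K(\bar{q}, G) \;\leq\; k\ell + 1 - (k+\ell) \;=\; (k-1)(\ell-1),
\]
and the bound is attained by $\bar{G}$ of Example \ref{G_bar}: there every $g_{ij} = [x_i, x^*] \cup [x^*, y_j]$ exists, so $N(\bar{G}) = k\ell$; and $\bar{G}$ is a tree, so $\chi(\bar{G}) = 1$. The dimension formula then yields $\dim K(\bar{q}, \bar{G}) = (k-1)(\ell-1)$ exactly, consistent with the bound (\ref{comparison_G_bar}) that places $\mathcal{F}_{\bar{G}}$ above every $\mathcal{F}_G$.

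The main obstacle is the Euler characteristic bound in the disconnected case. The informal claim that components can be handled separately is clear geometrically, but a careful argument must partition the sources $\{x_1,\dots,x_k\}$ and targets $\{y_1,\dots,y_\ell\}$ among the components according to which vertices they contain, apply the connected-case bound to each component with its induced $k', \ell', N', \chi'$, and verify that summing these yields the stated global inequality — in particular using that the empty $g_{ij}$'s (those with $x_i, y_j$ in different components) are already accounted for by the gap $k\ell - N(G)$.
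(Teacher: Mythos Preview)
You have misidentified the content of the corollary. Because the LaTeX source places the two enumerated items outside the \texttt{corollary} environment, the statement you were shown is only the hypothesis ``Suppose $G\in\Omega(\bar q)$''; the actual conclusions, which sit in the \texttt{enumerate} immediately following, are:
\begin{enumerate}
\item If $k+\ell \ge N(G)+\chi(G)$, then $\mathcal{F}_G=\{\bar q\}$.
\item If $k+\ell < N(G)+\chi(G)$ and $\bar q$ is an interior point of $K(\bar q,G)$, then $\mathcal{F}_G$ is a convex set of positive dimension; in particular $\mathcal{F}_G\neq\{\bar q\}$.
\end{enumerate}
The paper's proof is a two-line application of the dimension formula $\dim K(\bar q,G)=N(G)+\chi(G)-(k+\ell)$: in case (1) the polygon $K(\bar q,G)$ has dimension zero, hence equals $\{\bar q\}$, hence so does $\mathcal{F}_G$; in case (2) it has positive dimension, and since $U(\bar q)$ is convex and contains $\bar q$, intersecting with $U(\bar q)$ at an interior point of $K(\bar q,G)$ preserves positive dimension.

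Your proposal instead aims at a universal upper bound $\dim K(\bar q,G)\le (k-1)(\ell-1)$ via $N(G)\le k\ell$ and $\chi(G)\le 1$, with equality for the hub path $\bar G$. That is a different (and in fact unstated) result; it neither implies nor is implied by the dichotomy above. Moreover, the bound $\chi(G)\le 1$ is false in general for transport paths as defined here: $G$ need not be connected (nothing in Definition 2.1 forces it), and a disconnected forest with $c$ components has $\chi(G)=c>1$. Your componentwise patch does not rescue the inequality $\chi(G)\le 1$; it would at best yield $\dim K(\bar q,G)\le \sum_r (k_r-1)(\ell_r-1)$, which can exceed $(k-1)(\ell-1)$ only if it cannot, but in any case this is not what the corollary asserts. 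The fix is simply to prove the two items as the paper does, reading them straight off the dimension formula.
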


\begin{enumerate}
\item If $k+\ell \geq N\left( G\right) +\chi \left( G\right) $, then $%
\mathcal{F}_{G}=\left\{ \bar{q}\right\} $.

\item If $k+\ell <N\left( G\right) +\chi \left( G\right) $ and $\bar{q}$ is
an interior point of the polygon $K\left( \bar{q},G\right) $, then $\mathcal{%
F}_{G}$ is a convex set of positive dimension. In particular, $\mathcal{F}%
_{G}\neq \left\{ \bar{q}\right\} .$
\end{enumerate}

\begin{proof}
If $k+\ell \geq N\left( G\right) +\chi \left( G\right) $, the convex polygon
$K\left( \bar{q},G\right) $ becomes a dimension zero set, and thus $\mathcal{%
F}_{G}=\left\{ \bar{q}\right\} $. When $k+\ell <N\left( G\right) +\chi
\left( G\right) $, the polygon $K\left( \bar{q},G\right) $ has positive
dimension. Since each $u_{j}$ is concave, $U\left( \bar{q}\right) $ is a
convex set containing $\bar{q}$. When $\bar{q}$ is an interior point of $%
K\left( \bar{q},G\right) $, the intersection $\mathcal{F}_{G}=K\left( \bar{q}%
,G\right) \cap U\left( \bar{q}\right) $ is still a convex set of positive
dimension. Thus, $\mathcal{F}_{G}\neq \left\{ \bar{q}\right\} $.
\end{proof}

\begin{proposition}
\label{pairwise_dis}Suppose $G\in \Omega \left( \bar{q}\right) $ satisfies
the following condition: for any two pairs $\left( i_{1},i_{2}\right) $ with
$i_{1}\neq i_{2}$ and $\left( j_{1},j_{2}\right) $ with $j_{1}\neq j_{2}$,
we have
\begin{equation}
V\left( g_{i_{1}j_{2}}\right) \cap V\left( g_{i_{2}j_{1}}\right) =\emptyset
\text{,}  \label{pairwise_disjoint}
\end{equation}%
where $V\left( g_{ij}\right) $ is given in (\ref{V_g}). Then, $k+\ell \geq
N\left( G\right) +\chi \left( G\right) $. Hence, by Corollary \ref%
{corollary_kl}, $\mathcal{F}_{G}$ is a singleton $\left\{ \bar{q}\right\} $.
\end{proposition}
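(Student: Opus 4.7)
The plan is to first extract from the hypothesis a local statement at each vertex of $G$ and then induct on $|V(G)|$. Because (\ref{pairwise_disjoint}) is invariant under the swap $i_1\leftrightarrow i_2$, it is equivalent to: any two existing paths $g_{i_1 j_1}$ and $g_{i_2 j_2}$ with $i_1\ne i_2$ and $j_1\ne j_2$ are vertex-disjoint. Write $I(v)=\{i : v\in V(g_{ij}) \text{ for some } j\}$ and $J(v)=\{j : v\in V(g_{ij}) \text{ for some } i\}$. I would then show this is equivalent to the local condition: for every $v\in V(G)$, either $|I(v)|\le 1$ or $|J(v)|\le 1$. The nontrivial direction is a short case check: if both sets had size at least $2$, one can produce indices $i_1\ne i_2$ and $j_1\ne j_2$ such that $g_{i_1 j_2}$ and $g_{i_2 j_1}$ both exist and both pass through $v$, contradicting the hypothesis.

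Since $k$, $\ell$, $N(G)$, and $\chi(G)$ are each additive over the connected components of $G$, it suffices to treat connected $G$. I would then induct on $|V(G)|$. The balance equation (\ref{balance}) forces every interior vertex of $G$ to have degree at least $2$, so any leaf of $G$ is some $x_i$ or $y_j$. Consider the case of a target leaf $y_j$ with unique incident edge $e=[u,y_j]$, and set $\Lambda'(j)=\{i : g_{ij}\text{ exists}\}$. If $|\Lambda'(j)|=1$, simply delete $y_j$ and $e$: this decreases $|V|$, $\ell$, and $N$ each by $1$, leaves $\chi$ unchanged, and preserves the pairwise-disjoint condition, so the induction hypothesis closes the argument. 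If instead $|\Lambda'(j)|\ge 2$, the local condition applied at $u$ (where $|I(u)|\ge 2$) forces $|J(u)|=\{j\}$; I delete $y_j$ and $e$ and relabel $u$ as the new target of label $j$ carrying mass $n_j$. The resulting $G'$ has $|V(G')|=|V(G)|-1$ with the same $k$, $\ell$, $N$, and $\chi$, and the pairwise-disjoint condition descends since the newly relabeled paths $g'_{iu}$ share the common new target while each remains vertex-disjoint from unrelated paths by inheritance from $G$. Source leaves are handled symmetrically.

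The remaining case is when $G$ has no leaves, so every vertex has degree at least $2$ and $G$ contains a cycle $C$. Using the local condition together with the observation that $\{(i,j):e\subseteq g_{ij}\}\subseteq\{(i,j):e^-,e^+\in V(g_{ij})\}$ for each edge $e$, consecutive edges along $C$ must share a common source- or target-label, and one can analyze the structure of $C$ by following how each path enters and exits it. The plan is to identify a cycle-edge $e$ and a path $g_{i^*j^*}$ through $e$ such that $g_{i^*j^*}$ is the unique path using $e$, and then to excise $e$ together with $(i^*,j^*)$; this decreases $N$ by $1$, raises $\chi$ by $1$, leaves $k+\ell$ unchanged, and preserves the pairwise-disjoint condition, after which induction closes the case. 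The main obstacle is precisely this leaf-free step: a naive removal of a single cycle-edge can strip off several paths at once when the edge is shared, so producing an edge whose removal eliminates exactly one path requires a careful fiber analysis of the labeling map $V(G)\to\{x_1,\dots,x_k\}\cup\{y_1,\dots,y_\ell\}$ supplied by the local condition, exploiting the partition of $V(G)$ into the sets $V_S^i=\{v:I(v)=\{i\}\}$ and $V_T^j=\{v:|I(v)|\ge 2,\,J(v)=\{j\}\}$.
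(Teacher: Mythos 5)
Your local reformulation of (\ref{pairwise_disjoint}) --- at every vertex $v$ either $|I(v)|\le 1$ or $|J(v)|\le 1$ --- is correct and is essentially the same observation the paper exploits, and your leaf-removal reductions are plausible (modulo some loose bookkeeping: e.g.\ in the $|\Lambda'(j)|=1$ sub-case, deleting only $y_j$ and $e$ violates the balance equation at $u$ unless you either promote $u$ to a target, which does not decrease $\ell$, or subtract the flow $\bar q_{ij}$ along all of $g_{ij}$, which changes $N$, $\chi$ and the marginals by more than you claim). But the decisive problem is that the leaf-free case is not proved; it is only a ``plan'' with a self-acknowledged ``main obstacle,'' and that case is not vacuous --- after your relabelings one can easily arrive at a graph that is a single undirected cycle (take $k=\ell=2$ with $g_{11},g_{12}$ sharing an initial segment, $g_{21},g_{22}$ sharing an initial segment, and $g_{11},g_{21}$ resp.\ $g_{12},g_{22}$ meeting only at their targets). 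Since cycles are exactly what drive $\chi(G)$ down, the inequality $N(G)+\chi(G)\le k+\ell$ lives or dies on this case; the excision you propose must remove, for a single pair $(i^*,j^*)$, precisely the right number of vertices and edges so that $\chi$ rises by exactly $1$ while $N$ drops by exactly $1$, and you have not shown such a pair exists when cycle edges are shared by several paths. As it stands the argument is incomplete.

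For comparison, the paper avoids this difficulty by inducting on the number of sources $k$ rather than on $|V(G)|$: it splits off the subgraph $\tilde G$ of edges used only by the paths $g_{(k+1)j}$, observes that $\tilde G$ attaches to $G^{(k)}$ at $\tilde\ell$ vertices, and uses the disjointness hypothesis (exactly your local condition, applied at those attachment vertices) to show each attachment point carries a \emph{unique} target index, whence $N(G^{(k+1)})=N(G^{(k)})+\tilde\ell$ and $\chi(G^{(k+1)})=\chi(G^{(k)})+1-\tilde\ell$, and the count closes uniformly whether or not the attachment creates cycles. If you want to salvage your approach, the most promising route is to graft that idea onto your cycle case: peel off one source's entire bundle of paths at once instead of hunting for a single excisable edge.
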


\begin{proof}
We still use the notations that have been used in the proof of Lemma \ref%
{dimension_less}. When $k=1$, $\chi \left( G\right) =1+\ell -N\left(
G\right) $, and thus $k+\ell =N\left( G\right) +\chi \left( G\right) $. By
using induction, we assume that the result is true for any $k$ sources. We
want to show that it holds for $k+1$ sources. Suppose there are totally $d$
edges of $G^{\left( k+1\right) }$ connecting the vertex $x_{k+1}$, then as
discussed earlier, we may construct a transport path $\tilde{G}$ \ from a
single source $x_{k+1}$ to a few targets $\left\{ v_{1},v_{2},\cdots ,v_{%
\tilde{\ell}}\right\} $ with $v_{i}\in V\left( G^{\left( k\right) }\right) $%
. For each $v_{j}$, it corresponds to a unique $g_{\left( k+1\right) j^{\ast
}}$ for some $j^{\ast }\in \left\{ 1,2,\cdots ,\ell \right\} $ that passing
through the vertex $v_{j}$. Indeed, suppose both $g_{\left( k+1\right)
j_{1}} $ and $g_{\left( k+1\right) j_{2}}$ passing through $v_{j}$ with $%
j_{1}\neq j_{2}$. Since $v_{j}\in V\left( G^{\left( k\right) }\right) $,
there exists an $i^{\ast }\in \left\{ 1,2,\cdots ,k\right\} $ such that $%
x_{i^{\ast }}$ and $v_{j}$ are connected by a directed curve lying in $%
G^{\left( k\right) }$. Then, $v_{j}\in g_{i^{\ast }j_{2}}\cap g_{\left(
k+1\right) j_{1}}$, which contradicts condition (\ref{pairwise_disjoint}).
As a result,
\begin{equation*}
N\left( G^{\left( k+1\right) }\right) =N\left( G^{\left( k\right) }\right) +%
\tilde{\ell}.
\end{equation*}%
On the other hand, it is easy to see that $\chi \left( G^{\left( k+1\right)
}\right) =\chi \left( G^{\left( k\right) }\right) +1-\tilde{\ell}$. So, by
induction,
\begin{eqnarray*}
\left( k+1\right) +\ell &\geq &1+N\left( G^{\left( k\right) }\right) +\chi
\left( G^{\left( k\right) }\right) \\
&=&1+\left( N\left( G^{\left( k+1\right) }\right) -\tilde{\ell}\right)
+\left( \chi \left( G^{\left( k+1\right) }\right) +\tilde{\ell}-1\right)
=N\left( G^{\left( k+1\right) }\right) +\chi \left( G^{\left( k+1\right)
}\right) .
\end{eqnarray*}%
This shows that
\begin{equation*}
k+\ell \geq N\left( G\right) +\chi \left( G\right)
\end{equation*}%
for any $G$ satisfying condition (\ref{pairwise_disjoint}). Therefore, $%
\mathcal{F}_{G}$ is a singleton $\left\{ \bar{q}\right\} $.
\end{proof}

In Proposition \ref{colliner_general}, we will consider an inverse problem
of Proposition \ref{pairwise_dis} under some suitable conditions on the
prices.

Given two transport paths
\begin{eqnarray*}
G_{1} &=&\left\{ V\left( G_{1}\right) ,E\left( G_{1}\right) ,w_{1}:E\left(
G_{1}\right) \rightarrow \lbrack 0,+\infty )\right\} \text{ and} \\
G_{2} &=&\left\{ V\left( G_{2}\right) ,E\left( G_{2}\right) ,w_{2}:E\left(
G_{2}\right) \rightarrow \lbrack 0,+\infty )\right\} ,
\end{eqnarray*}%
we say $G_{1}$ is topologically equivalent to $G_{2}$ if there exists a
homeomorphism $h:X\rightarrow X$ such that
\begin{eqnarray*}
V\left( G_{2}\right) &=&h\left( V\left( G_{1}\right) \right) , \\
E\left( G_{2}\right) &=&\left\{ h\left( e\right) :e\in E\left( G_{1}\right)
\right\} \\
\text{ and }w_{2}\left( h\left( e\right) \right) &=&w_{1}\left( e\right)
\text{ for each }e\in E\left( G_{1}\right) \text{.}
\end{eqnarray*}%
Clearly, if $G_{1}$ is topologically equivalent to $G_{2},$ then $K\left(
\bar{q},G_{1}\right) =K\left( \bar{q},G_{2}\right) $. As a result, we know $%
\mathcal{V}$ is topologically invariant:

\begin{proposition}
\label{topology}If $G_{1}$ is topologically equivalent to $G_{2}$, then $%
\mathcal{V}\left( G_{1}\right) =\mathcal{V}\left( G_{2}\right) $.
\end{proposition}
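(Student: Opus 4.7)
The plan is to unwind the definition of $\mathcal{V}$ and reduce the claim to the equality of feasibility sets $\mathcal{F}_{G_1} = \mathcal{F}_{G_2}$, which in turn follows from the observation (already flagged by the authors in the paragraph preceding the statement) that $K(\bar{q}, G_1) = K(\bar{q}, G_2)$ whenever $G_1$ and $G_2$ are topologically equivalent.

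First, I would verify the equality $K(\bar{q}, G_1) = K(\bar{q}, G_2)$ carefully. Given the homeomorphism $h\colon X \to X$ realizing the topological equivalence, $h$ induces a weight-preserving bijection between $E(G_1)$ and $E(G_2)$, and, since the sources $x_i$ and consumers $y_j$ are features of the fixed economy $\mathcal{E}$ that must lie in the vertex set of either transport path, it fixes the sets $\{x_1,\ldots,x_k\}$ and $\{y_1,\ldots,y_\ell\}$ pointwise. Consequently, $h$ carries the directed polyhedral curve $g_{ij}^{(1)}$ from $x_i$ to $y_j$ in $G_1$ bijectively onto $g_{ij}^{(2)}$ in $G_2$, with $g_{ij}^{(1)}$ existing if and only if $g_{ij}^{(2)}$ does. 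Since each constraint $\sum_{e \subseteq g_{ij}} q_{ij} = w(e)$ from (\ref{edge_equation}) is determined solely by the incidence pattern between edges $e$ and the curves $g_{ij}$ together with the weight $w(e)$, and both data are preserved by $h$, the linear system defining $K(\bar{q}, G_1)$ agrees term by term with the one defining $K(\bar{q}, G_2)$.

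Next, since the set $U(\bar{q})$ depends only on $\bar{q}$ and on the fixed utility functions and not on the transport path, I can write $\mathcal{F}_{G_1} = K(\bar{q}, G_1) \cap U(\bar{q}) = K(\bar{q}, G_2) \cap U(\bar{q}) = \mathcal{F}_{G_2}$. The function $S$ from (\ref{S_function}) depends only on $q$ together with the fixed prices and utilities, so $\max_{q \in \mathcal{F}_{G_1}} S(q) = \max_{q \in \mathcal{F}_{G_2}} S(q)$, and subtracting the common term $S(\bar{q})$ gives $\mathcal{V}(G_1) = \mathcal{V}(G_2)$. There is essentially no deep obstacle here: the proposition is a direct unwinding of definitions, and the only step that benefits from a little care is the matching of the edge-incidence constraints under $h$, where one must keep track of the fact that $h$ should act trivially on the source and consumer locations in order for the statement to be meaningful.
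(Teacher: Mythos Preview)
Your proposal is correct and follows exactly the route the paper takes: the paper simply notes that topological equivalence gives $K(\bar{q},G_1)=K(\bar{q},G_2)$ and states the proposition without further proof, while you spell out the same argument in more detail, passing from $K(\bar{q},G_1)=K(\bar{q},G_2)$ to $\mathcal{F}_{G_1}=\mathcal{F}_{G_2}$ and then to equality of exchange values.
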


As will be clear in the next section, the topological invariance of $%
\mathcal{V}$ is a very useful result because it enables us to inherit many
existing theories in ramified optimal transportation when studying a new
optimal transport problem there.

\subsection{Preferences and Exchange Value}

In this subsection, we will study the implications of preferences, which are
represented by utility functions, on the exchange value. The following
proposition shows that there is no exchange value when all consumers derive
their utilities solely from the total amount of goods they consume.

\begin{proposition}
\label{quantity_u}If $u_{j}:\mathbb{R}_{+}^{k}\rightarrow \mathbb{R} $ is of
the form $u_{j}\left( q_{j}\right) =f_{j}\left( \sum_{i=1}^{k}q_{ij}\right) $
for some $f_{j}:[0,\infty )\rightarrow \mathbb{R}$ for each $j=1,...,\ell ,$
then $\mathcal{V}\left( G\right) =0$ for any $G\in \Omega \left( \bar{q}%
\right) $.
\end{proposition}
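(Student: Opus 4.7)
The plan is to exploit the marginal constraint that every transport plan $q\in\mathcal{F}_G$ must satisfy as a member of $Plan(\mathbf{a},\mathbf{b})$. Since the compatibility condition forces $\mathcal{F}_G\subseteq Plan(\mathbf{a},\mathbf{b})$, every feasible $q$ obeys the second marginal equation $\sum_{i=1}^{k}q_{ij}=n_j$ from (\ref{margins}), with the same $n_j$ as for $\bar q$.

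First I would observe that under the hypothesis $u_j(q_j)=f_j\!\left(\sum_{i=1}^{k}q_{ij}\right)$, the $j$-th consumer's utility depends on $q$ only through the column sum $\sum_i q_{ij}$. For any $q\in\mathcal{F}_G$ this column sum equals $n_j$, which is the same value as for $\bar q$. Consequently
\[
u_j(q_j)=f_j(n_j)=u_j(\bar q_j)\quad\text{for all }j=1,\dots,\ell.
\]

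Next I would feed this identity into the definition (\ref{S_function}) of $S$: for every $q\in\mathcal{F}_G$,
\[
S(q)=\sum_{j=1}^{\ell}e_j(p_j,u_j(q_j))=\sum_{j=1}^{\ell}e_j(p_j,u_j(\bar q_j))=S(\bar q).
\]
Hence $\max_{q\in\mathcal{F}_G}S(q)=S(\bar q)$, and plugging into (\ref{main_problem}) gives $\mathcal{V}(G)=0$.

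There is no real obstacle here; the only thing worth checking carefully is the automatic membership $\mathcal{F}_G\subseteq Plan(\mathbf{a},\mathbf{b})$, which was noted in the text just after the definition of $\mathcal{F}_G$ and which is what makes the column-sum argument work. Neither the structure of $G$ nor the specific form of the $f_j$'s plays any further role, which is precisely the content of the proposition: when preferences ignore the composition of the consumption bundle, the transport path offers no opportunity for welfare-improving exchange.
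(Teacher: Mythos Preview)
Your proof is correct and follows essentially the same route as the paper: both arguments use that any $q\in\mathcal{F}_G$ has the same column sums $\sum_i q_{ij}=n_j=\sum_i\bar q_{ij}$ as $\bar q$, whence $u_j(q_j)=u_j(\bar q_j)$ for every $j$ and $S(q)=S(\bar q)$, giving $\mathcal{V}(G)=0$. The paper phrases the column-sum equality as a consequence of ``compatibility,'' while you invoke the containment $\mathcal{F}_G\subseteq Plan(\mathbf{a},\mathbf{b})$ noted just after (\ref{feasible}); these are the same observation.
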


\begin{proof}
For any $q\in \mathcal{F}_{G},$ by compatibility, we know
\begin{equation*}
\sum_{i=1}^{k}q_{ij}=\sum_{i=1}^{k}\bar{q}_{ij},\text{ }j=1,...,\ell ,
\end{equation*}%
which implies
\begin{equation*}
u_{j}\left( q_{j}\right) =f_{j}\left( \sum_{i=1}^{k}q_{ij}\right)
=f_{j}\left( \sum_{i=1}^{k}\bar{q}_{ij}\right) =u_{j}\left( \bar{q}%
_{j}\right) ,
\end{equation*}%
showing that all consumers find any feasible plan indifferent to $\bar{q}.$
Therefore, we get
\begin{equation*}
\mathcal{V}\left( G\right) =\underset{q\in \mathcal{F}_{G}}{\max }\text{ }%
S\left( q\right) -S\left( \bar{q}\right) =0.
\end{equation*}
\end{proof}

For any $G\in \Omega \left( \bar{q}\right) $, denote $Q\left( G\right) $ as
the solution set of the maximization problem (\ref{main_problem}) defining
exchange value, i.e.,
\begin{equation}
Q\left( G\right) =\left\{ \hat{q}\in \mathcal{F}_{G}\text{ }|\text{ }%
\mathcal{V}\left( G\right) =S\left( \hat{q}\right) -S\left( \bar{q}\right)
\right\} .  \label{Q_G}
\end{equation}%
We are interested in describing geometric properties of the set $Q\left(
G\right) $. In particular, if $Q\left( G\right) $ contains only one element,
then the problem (\ref{main_problem}) has a unique solution.

\begin{proposition}
\label{uniqueness}For any $G\in \Omega \left( \bar{q}\right) ,$

\begin{enumerate}
\item The solution set $Q\left( G\right) $ is a compact nonempty set.

\item If $u_{j}:\mathbb{R}_{+}^{k}\rightarrow \mathbb{R}$ is homogeneous of
degree $\beta _{j}>0$ and $\left( u_{j}\left( q_{j}\right) \right) ^{\frac{1%
}{\beta _{j}}}$ is concave in $q_{j},$ $j=1,...,\ell ,$ then $Q\left(
G\right) $ is convex.

\item (Uniqueness) If $u_{j}:\mathbb{R}_{+}^{k}\rightarrow \mathbb{R}$ is
homogeneous of degree $\beta _{j}>0$ and$\ \left( u_{j}\left( q_{j}\right)
\right) ^{\frac{1}{\beta _{j}}}$ is concave in $q_{j}$ satisfying the
condition
\begin{equation}
\left( u_{j}\left( \left( 1-\lambda _{j}\right) \tilde{q}_{j}+\lambda _{j}%
\hat{q}_{j}\right) \right) ^{\frac{1}{\beta _{j}}}>\left( 1-\lambda
_{j}\right) \left( u_{j}\left( \tilde{q}_{j}\right) \right) ^{\frac{1}{\beta
_{j}}}+\lambda _{j}\left( u_{j}\left( \hat{q}_{j}\right) \right) ^{\frac{1}{%
\beta _{j}}}  \label{nearly_concave}
\end{equation}%
for each $\lambda _{j}\in \left( 0,1\right) $, and any non-collinear $\tilde{%
q}_{j},\hat{q}_{j}\in \mathbb{R}_{+}^{k}$ , for each $j=1,...,\ell .$ Then $%
Q\left( G\right) $ is a singleton, and thus the problem (\ref{main_problem})
has a unique solution.
\end{enumerate}
\end{proposition}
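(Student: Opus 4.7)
My plan is to prove the three parts in sequence: the first follows from standard continuity and compactness, the second from rewriting $S$ via Lemma~\ref{homogeneous} to expose concavity, and the third from combining this concavity with a contradiction argument that uses the transport-plan marginal constraint to force non-collinearity.

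For part (1), I would invoke the extreme value theorem. Since $\mathcal{F}_G$ is nonempty and compact (shown earlier in this section) and $S$ is continuous by Lemma~\ref{Properties_S}(4), the maximum in (\ref{main_problem}) is attained, so $Q(G)\neq\emptyset$. Moreover, $Q(G)$ is the preimage under the continuous map $S|_{\mathcal{F}_G}$ of its maximum value, hence closed in the compact set $\mathcal{F}_G$ and therefore compact.

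For part (2), I would apply Lemma~\ref{homogeneous} to rewrite
\[
S(q)\;=\;\sum_{j=1}^{\ell}e_{j}(p_{j},1)\,\bigl(u_{j}(q_{j})\bigr)^{1/\beta_{j}}.
\]
The coefficients $e_{j}(p_{j},1)$ are positive since $u_{j}(\mathbf{0})=0<1$. The projection $q\mapsto q_{j}$ is linear and $(u_{j}(q_{j}))^{1/\beta_{j}}$ is concave in $q_{j}$ by hypothesis, so each summand is concave in $q$; consequently $S$ is concave on $\mathcal{P}(X\times X)$. Since $\mathcal{F}_G$ is convex, $Q(G)$---the argmax of a concave function on a convex set---is convex.

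For part (3), I would argue by contradiction. Suppose $\tilde{q},\hat{q}\in Q(G)$ are distinct, so that $\tilde{q}_{j}\neq\hat{q}_{j}$ for some index $j$. The main obstacle is that the strict inequality (\ref{nearly_concave}) requires $\tilde{q}_{j}$ and $\hat{q}_{j}$ to be non-collinear, which is not a priori clear. I would dispatch this via the transport-plan marginal constraints (\ref{margins}): feasibility forces $\sum_{i}\tilde{q}_{ij}=\sum_{i}\hat{q}_{ij}=n_{j}>0$, so if $\hat{q}_{j}=c\,\tilde{q}_{j}$ for some scalar $c$, summing over $i$ yields $c=1$ and hence $\tilde{q}_{j}=\hat{q}_{j}$, a contradiction. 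Thus $\tilde{q}_{j}$ and $\hat{q}_{j}$ must be non-collinear, and (\ref{nearly_concave}) delivers a strict inequality at this index. Setting $q^{\ast}=\tfrac{1}{2}\tilde{q}+\tfrac{1}{2}\hat{q}\in\mathcal{F}_G$, the strict inequality at $j$ together with the weak concavity estimates for the remaining indices give
\[
S(q^{\ast})\;>\;\tfrac{1}{2}S(\tilde{q})+\tfrac{1}{2}S(\hat{q})\;=\;S(\tilde{q}),
\]
contradicting the maximality of $\tilde{q}$. Therefore $Q(G)$ is a singleton, and the maximization problem (\ref{main_problem}) has a unique solution.
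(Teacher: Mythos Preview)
Your proof is correct and follows essentially the same approach as the paper's: continuity and compactness for (1), the Lemma~\ref{homogeneous} rewrite $S(q)=\sum_j e_j(p_j,1)\,(u_j(q_j))^{1/\beta_j}$ to expose concavity of $S$ for (2), and the marginal constraint $\sum_i q_{ij}=n_j>0$ to dispose of the collinear case in (3). The only cosmetic difference is that in (3) the paper argues that equality in the concavity inequality forces collinearity at every $j$ and then $t_j=1$, whereas you fix a single index $j$ where the two plans differ and exhibit a strict improvement at the midpoint; these are equivalent packagings of the same idea.
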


\begin{proof}
Since the function $S\left( q\right) $ is continuous in $q$, $Q\left(
G\right) $ becomes a closed subset of the compact set $\mathcal{F}_{G}$, and
thus $Q\left( G\right) $ is also compact.

If $u_{j}:\mathbb{R}_{+}^{k}\rightarrow \mathbb{R}$ is homogeneous of degree
$\beta _{j}>0$, then Lemma \ref{homogeneous} implies that
\begin{equation}
e_{j}\left( p_{j},u_{j}\left( q_{j}\right) \right) =e_{j}\left(
p_{j},1\right) \left( u_{j}\left( q_{j}\right) \right) ^{\frac{1}{\beta _{j}}%
}\text{ and }S\left( q\right) =\sum\nolimits_{j=1}^{\ell }e_{j}\left(
p_{j},1\right) \left( u_{j}\left( q_{j}\right) \right) ^{\frac{1}{\beta _{j}}%
}.  \label{S_concave}
\end{equation}%
Thus, when each $\left( u_{j}\left( q_{j}\right) \right) ^{\frac{1}{\beta
_{j}}}$ is concave in $q_{j}$, we have $S$ is concave in $q$. Now, for any $%
q^{\ast },\tilde{q}\in Q\left( G\right) $ and $\lambda \in \left[ 0,1\right]
,$ the convexity of $\mathcal{F}_{G}$ implies $\left( 1-\lambda \right)
q^{\ast }+\lambda \tilde{q}\in $ $\mathcal{F}_{G}$ and the concavity of $S$
implies
\begin{equation}
S\left( \left( 1-\lambda \right) q^{\ast }+\lambda \tilde{q}\right) -S\left(
\bar{q}\right) \geq \left( 1-\lambda \right) \left( S\left( q^{\ast }\right)
-S\left( \bar{q}\right) \right) +\lambda \left( S\left( \tilde{q}\right)
-S\left( \bar{q}\right) \right) =\mathcal{V}\left( G\right) ,
\label{S_inequality}
\end{equation}%
showing that $\left( 1-\lambda \right) q^{\ast }+\lambda \tilde{q}\in
Q\left( G\right) .$ Therefore, $Q\left( G\right) $ is convex.

To prove the uniqueness, we note that $\left( 1-\lambda \right) q^{\ast
}+\lambda \tilde{q}\in Q\left( G\right) $ implies an equality in (\ref%
{S_inequality}), i.e.,%
\begin{equation}
\left( u_{j}\left( \left( 1-\lambda \right) q_{j}^{\ast }+\lambda \tilde{q}%
_{j}\right) \right) ^{\frac{1}{\beta _{j}}}=\left( 1-\lambda \right) \left(
u_{j}\left( q_{j}^{\ast }\right) \right) ^{\frac{1}{\beta _{j}}}+\lambda
\left( u_{j}\left( \tilde{q}_{j}\right) \right) ^{\frac{1}{\beta _{j}}}
\label{equality_u_j}
\end{equation}%
for each $\lambda \in \left( 0,1\right) $, and each $j=1,2,\cdots ,\ell $.
When $\left( u_{j}\left( q_{j}\right) \right) ^{\frac{1}{\beta _{j}}}$ is
concave in $q_{j}$ and satisfies (\ref{nearly_concave}), the equality (\ref%
{equality_u_j}) implies that $q_{j}^{\ast }$ and $\tilde{q}_{j}$ are
collinear in the sense that $q_{j}^{\ast }=t_{j}\tilde{q}_{j}$ for some $%
t_{j}\geq 0$. By (\ref{margins}),
\begin{equation*}
n_{j}=\sum_{i}q_{ij}^{\ast }=\sum_{i}t_{j}\tilde{q}_{ij}=t_{j}\sum_{j}\tilde{%
q}_{ij}=t_{j}n_{j}.
\end{equation*}%
Therefore, $t_{j}=1$ as $n_{j}>0$. This shows $q^{\ast }=\tilde{q}$ and thus
$Q\left( G\right) $ is a singleton with an element $\tilde{q}.$
\end{proof}

Two classes of utility functions widely used in economics satisfy conditions
in Proposition (\ref{uniqueness}). One is Cobb-Douglas function (\cite%
{Mas-Colell})
\begin{equation*}
u:\mathbb{R}_{+}^{k}\rightarrow \mathbb{R}:u\left( q_{1},...,q_{k}\right)
=\prod\limits_{i=1}^{k}\left( q_{i}\right) ^{\tau _{i}},\text{ }\tau _{i}>0,%
\text{ }i=1,...,k.
\end{equation*}%
The other is Constant Elasticity of Substitution function (\cite{Mas-Colell}%
)
\begin{equation*}
u:\mathbb{R}_{+}^{k}\rightarrow \mathbb{R}:u\left( q_{1},...,q_{k}\right) =%
\left[ \sum_{i=1}^{k}\gamma _{i}\left( q_{i}\right) ^{\tau }\right] ^{\frac{%
\beta }{\tau }},\text{ }\tau \in (0,1),\text{ }\beta >0,\text{ }\gamma
_{i}>0,\text{ }i=1,...,k.
\end{equation*}

\begin{proposition}
\label{V_G_positive}Suppose $u_{j}:\mathbb{R}_{+}^{k}\rightarrow \mathbb{R}$
is homogeneous of degree $\beta _{j}>0$ and $\left( u_{j}\left( q_{j}\right)
\right) ^{\frac{1}{\beta _{j}}}$ is concave in $q_{j}$ satisfying (\ref%
{nearly_concave}) for each $j=1,...,\ell $. For any $G\in \Omega \left( \bar{%
q}\right) ,$ $\mathcal{V}\left( G\right) >0$ if and only if $\mathcal{F}%
_{G}\neq \left\{ \bar{q}\right\} $.
\end{proposition}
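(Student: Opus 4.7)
The plan is to prove the two implications separately. The forward direction is immediate from the definition: if $\mathcal{F}_{G}=\{\bar{q}\}$, then $\mathcal{V}(G)=\max_{q\in\mathcal{F}_G}S(q)-S(\bar{q})=S(\bar{q})-S(\bar{q})=0$, so $\mathcal{V}(G)>0$ forces $\mathcal{F}_G$ to contain a point other than $\bar{q}$.

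For the reverse direction, pick any $\hat{q}\in\mathcal{F}_{G}$ with $\hat{q}\neq\bar{q}$. By (\ref{S_q_comparison}) we already have $S(\hat{q})\geq S(\bar{q})$. If the inequality is strict then $\mathcal{V}(G)\geq S(\hat{q})-S(\bar{q})>0$ and we are done, so the substantive case is $S(\hat{q})=S(\bar{q})$. In this case I would use the homogeneity representation (\ref{S_concave}), which writes $S$ as a positive linear combination $\sum_j e_j(p_j,1)(u_j(q_j))^{1/\beta_j}$, together with the termwise inequalities $u_j(\hat{q}_j)\geq u_j(\bar{q}_j)$ and the fact that each $e_j(p_j,1)>0$ (homogeneity forces $u_j(0)=0$, and since $u_j$ is increasing and $p_j\in\mathbb{R}^k_{++}$, reaching utility $1$ requires strictly positive expenditure). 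Equality of the sums then forces $u_j(\hat{q}_j)=u_j(\bar{q}_j)$ for every $j$.

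Next I would form the convex combination $q^{\lambda}=(1-\lambda)\bar{q}+\lambda\hat{q}$ with $\lambda\in(0,1)$. Convexity of $\mathcal{F}_G$ (already established) gives $q^{\lambda}\in\mathcal{F}_G$. To produce the needed strict increase in $S$, I would invoke the marginal argument used in the uniqueness part of Proposition \ref{uniqueness}: if $\hat{q}_j$ and $\bar{q}_j$ were collinear for every $j$, then the marginal constraint $\sum_i\hat{q}_{ij}=\sum_i\bar{q}_{ij}=n_j$ would force $\hat{q}_j=\bar{q}_j$ for all $j$, contradicting $\hat{q}\neq\bar{q}$. Hence some index $j_0$ admits non-collinear $\hat{q}_{j_0},\bar{q}_{j_0}$, and (\ref{nearly_concave}) applied to that index, combined with the base concavity of $(u_j)^{1/\beta_j}$ for the remaining $j$ (using $u_j(\hat{q}_j)=u_j(\bar{q}_j)$ so the affine lower bound equals $(u_j(\bar{q}_j))^{1/\beta_j}$), yields a strict inequality in the $j_0$ summand and non-strict inequalities elsewhere. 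Summing with the positive coefficients $e_j(p_j,1)$ gives $S(q^{\lambda})>S(\bar{q})$, so $\mathcal{V}(G)\geq S(q^{\lambda})-S(\bar{q})>0$.

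The main obstacle is the degenerate case $S(\hat{q})=S(\bar{q})$, since direct comparison with $\hat{q}$ fails there; the convex-combination trick together with the strict-concavity hypothesis (\ref{nearly_concave}) and the marginal-collinearity argument are exactly what bridge this gap.
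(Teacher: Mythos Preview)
Your argument is correct and rests on the same core ingredients as the paper's: the representation (\ref{S_concave}), the strict concavity condition (\ref{nearly_concave}) on a non-collinear pair, and the marginal constraint $\sum_i q_{ij}=n_j$ to rule out collinearity. The paper's proof is simply a more compressed version: rather than taking $\hat q\in\mathcal{F}_G\setminus\{\bar q\}$ and building the convex combination by hand, it argues by contraposition that $\mathcal{V}(G)=0$ together with (\ref{S_q_comparison}) forces $S\equiv S(\bar q)$ on $\mathcal{F}_G$, hence $Q(G)=\mathcal{F}_G$, and then invokes the uniqueness statement of Proposition~\ref{uniqueness} directly to conclude $\mathcal{F}_G=\{\bar q\}$. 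Your proof effectively re-proves that uniqueness step in situ; the paper just cites it.
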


\begin{proof}
Clearly, if\ $\mathcal{F}_{G}=\left\{ \bar{q}\right\} $, then $\mathcal{V}%
\left( G\right) =0$. On the other hand, suppose $\mathcal{V}\left( G\right)
=\max_{q\in \mathcal{F}_{G}}S\left( q\right) -S\left( \bar{q}\right) =0$,
then by (\ref{S_q_comparison}), we have
\begin{equation*}
S\left( q\right) =S\left( \bar{q}\right) \text{ for each }q\in \mathcal{F}%
_{G}.
\end{equation*}%
This implies $Q\left( G\right) =\mathcal{F}_{G}$. By proposition \ref%
{uniqueness}, $\mathcal{F}_{G}$ is a singleton $\left\{ \bar{q}\right\} $.
\end{proof}

This proposition says that each transport path $G\in \Omega \left( \bar{q}%
\right) $ has a positive exchange value as long as $\mathcal{F}_{G}$
contains more than one element.

\begin{theorem}
\label{theorem 1}Suppose $u_{j}:\mathbb{R}_{+}^{k}\rightarrow \mathbb{R}$ is
homogeneous of degree $\beta _{j}>0$ and $\left( u_{j}\left( q_{j}\right)
\right) ^{\frac{1}{\beta _{j}}}$ is concave in $q_{j}$ satisfying (\ref%
{nearly_concave}) for each $j=1,...,\ell $. \ If $k+\ell <N\left( G\right)
+\chi \left( G\right) $ and $\bar{q}$ is an interior point of the polygon $%
K\left( \bar{q},G\right) $, then $\mathcal{V}\left( G\right) >0$.
\end{theorem}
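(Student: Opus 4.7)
The plan is to observe that Theorem \ref{theorem 1} falls out essentially immediately from the two preceding results, namely Corollary \ref{corollary_kl}(2) and Proposition \ref{V_G_positive}, and the proof is just a matter of chaining them together.

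First I would invoke Corollary \ref{corollary_kl}(2). Its hypotheses are exactly the two geometric assumptions in the theorem: that $k+\ell < N(G)+\chi(G)$ and that $\bar{q}$ is an interior point of the convex polygon $K(\bar{q},G)$. The conclusion it provides is that the feasible set $\mathcal{F}_G = K(\bar{q},G)\cap U(\bar{q})$ is a convex set of positive dimension; in particular, $\mathcal{F}_G \neq \{\bar{q}\}$. This uses only concavity of each $u_j$ (to guarantee $U(\bar{q})$ is convex and contains $\bar{q}$), which is built into the standing assumption on utilities and is also implied by the stronger homogeneity/concavity assumptions in the statement of the theorem.

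Next I would invoke Proposition \ref{V_G_positive}. Its hypotheses are exactly the utility assumptions of Theorem \ref{theorem 1}: each $u_j$ is homogeneous of degree $\beta_j > 0$, and $(u_j)^{1/\beta_j}$ is concave in $q_j$ and satisfies the strict-concavity-type condition (\ref{nearly_concave}). Its conclusion is the equivalence $\mathcal{V}(G) > 0 \iff \mathcal{F}_G \neq \{\bar{q}\}$. Combined with the output of the previous step, we immediately get $\mathcal{V}(G) > 0$, which is what the theorem claims.

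Since both of the ingredients are already established in the paper, there is no real obstacle here; the one thing worth double-checking is that the hypotheses of Theorem \ref{theorem 1} really do imply the hypotheses of Corollary \ref{corollary_kl}(2). The geometric conditions transfer verbatim, and the concavity of each $u_j$ (needed for $U(\bar{q})$ to be convex in the corollary's proof) follows from the concavity of $(u_j)^{1/\beta_j}$ together with the homogeneity $u_j(\lambda q_j) = \lambda^{\beta_j} u_j(q_j)$, since $(u_j)^{1/\beta_j}$ is a concave, homogeneous-of-degree-one function whose $\beta_j$-th power need not itself be concave when $\beta_j > 1$; however, what the corollary actually uses is only that $U(\bar q)$ is a convex set containing $\bar q$, and this follows from the concavity of $(u_j)^{1/\beta_j}$ alone, since the upper level sets of $u_j$ coincide with those of $(u_j)^{1/\beta_j}$. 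Thus both invocations are legitimate and the short proof is complete.
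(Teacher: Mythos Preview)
Your proposal is correct and matches the paper's own proof exactly: the paper simply writes ``This follows from Proposition \ref{V_G_positive} and Corollary \ref{corollary_kl}.'' Your extra care about whether concavity of $u_j$ is available is unnecessary here, since the paper carries the standing assumption that each $u_j$ is continuous, concave, and increasing, but your observation that only quasi-concavity (convex upper level sets) is needed is valid and harmless.
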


\begin{proof}
This follows from Proposition \ref{V_G_positive} and Corollary \ref%
{corollary_kl}.
\end{proof}

\subsection{Prices and Exchange Value}

In this subsection, we show that one can observe the collinearity in prices
to determine the existence of a positive exchange value.

\begin{proposition}
\label{collinear1}If the price vectors are collinear, i.e., $p_{j}=\lambda
_{j}p_{1},$ for some $\lambda _{j}>0,$ $j=1,...,\ell ,$ then $\mathcal{V}%
\left( G\right) =0$ for any $G\in \Omega \left( \bar{q}\right) $.
\end{proposition}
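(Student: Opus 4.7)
The plan is to show that under collinear prices, every feasible plan $q \in \mathcal{F}_G$ actually delivers the same utility profile as $\bar{q}$, so $S(q) = S(\bar{q})$ and the exchange value collapses to $0$. The core identity I want to exploit is that compatibility of $q$ with $G$ forces the source marginals to match those of $\bar{q}$, i.e.\ $\sum_{j} q_{ij} = m_i$ for each $i$, and this combined with $p_j = \lambda_j p_1$ gives an aggregate budget identity that is invariant across $\mathcal{F}_G$.

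First I would set up the baseline computation. Since each $u_j$ is increasing, local non-satiation implies $p_j \cdot \bar{q}_j = w_j$, and by duality between (\ref{q_bar}) and (\ref{ex_min}) one gets $e_j(p_j, u_j(\bar{q}_j)) = w_j$, so $S(\bar{q}) = \sum_j w_j$. Next, for any $q \in \mathcal{F}_G$, collinearity gives
\begin{equation*}
\sum_{j=1}^{\ell} \frac{p_j \cdot q_j}{\lambda_j} = p_1 \cdot \sum_{j=1}^{\ell} q_j = \sum_{i=1}^{k} p_{i1} \sum_{j=1}^{\ell} q_{ij} = \sum_{i=1}^{k} p_{i1} m_i,
\end{equation*}
a quantity that depends only on $\mathbf{a}$ and $p_1$, not on $q$. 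Applying the same identity to $\bar{q}$ yields $\sum_j (p_j \cdot q_j)/\lambda_j = \sum_j w_j/\lambda_j$.

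Now I would plug in the per-consumer inequality $p_j \cdot q_j \geq w_j$. This holds because $q \in \mathcal{F}_G$ satisfies $u_j(q_j) \geq u_j(\bar{q}_j)$, and if $p_j \cdot q_j < w_j$ we could, by local non-satiation, find a bundle strictly within $j$'s budget giving strictly higher utility than $\bar{q}_j$, contradicting the optimality of $\bar{q}_j$ in (\ref{q_bar}). Dividing by $\lambda_j > 0$ preserves the inequality $p_j \cdot q_j / \lambda_j \geq w_j/\lambda_j$, and since the sum across $j$ of both sides agrees by the previous step, each inequality must be an equality: $p_j \cdot q_j = w_j$ for every $j$.

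This is the decisive step. Once each $q_j$ lies on consumer $j$'s budget line, optimality of $\bar{q}_j$ forces $u_j(q_j) \leq u_j(\bar{q}_j)$, which combined with the feasibility constraint gives $u_j(q_j) = u_j(\bar{q}_j)$. Therefore $e_j(p_j, u_j(q_j)) = e_j(p_j, u_j(\bar{q}_j))$ for each $j$, so $S(q) = S(\bar{q})$ for every $q \in \mathcal{F}_G$, and hence $\mathcal{V}(G) = 0$. The only part needing care is the possibility that ``increasing'' is read weakly; I would therefore invoke local non-satiation explicitly in Step~3 to justify the strict inequality argument, since the rest of the proof is essentially an aggregation-plus-tight-inequality computation.
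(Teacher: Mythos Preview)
Your proof is correct and uses essentially the same two ingredients as the paper's: the aggregate identity $\sum_j p_1\cdot q_j=\sum_j p_1\cdot \bar q_j$ coming from feasibility plus collinearity, and the per-consumer inequality $p_j\cdot q_j\ge p_j\cdot \bar q_j$ coming from $u_j(q_j)\ge u_j(\bar q_j)$ and monotonicity. The only difference is organizational: the paper argues by contradiction (a strict utility improvement for some $j^\ast$ forces a strict price inequality, breaking the aggregate equality), whereas you argue directly (the tight aggregate forces $p_j\cdot q_j=w_j$ for every $j$, hence $u_j(q_j)=u_j(\bar q_j)$), which in fact yields the slightly stronger conclusion that every $q\in\mathcal F_G$ has the same utility profile as $\bar q$.
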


\begin{proof}
Assume that $\mathcal{V}\left( G\right) >0$. Then we know there exists a
feasible plan $q\in \mathcal{F}_{G}$ such that $u_{j}\left( q_{j}\right)
\geq u_{j}\left( \bar{q}_{j}\right) $, $j=1,...,\ell ,$ with at least one
strict inequality. Without loss of generality, we assume $u_{j^{\ast
}}\left( q_{j^{\ast }}\right) >u_{j^{\ast }}\left( \bar{q}_{j^{\ast
}}\right) .$ For any $j=1,...,\ell ,$ $u_{j}\left( q_{j}\right) \geq
u_{j}\left( \bar{q}_{j}\right) $ implies $p_{j}\cdot q_{j}\geq p_{j}\cdot
\bar{q}_{j}.$ If not, i.e., $p_{j}\cdot q_{j}<p_{j}\cdot \bar{q}_{j},$ then
by the monotonicity of $u_{j}$, we can find a $\tilde{q}_{j}\in \mathbb{R}%
_{+}^{k}$ such that $\tilde{q}_{j}>q_{j}$,
\begin{equation*}
u_{j}\left( \tilde{q}_{j}\right) >u_{j}\left( q_{j}\right) \geq u_{j}\left(
\bar{q}_{j}\right) \text{ and }p_{j}\cdot \tilde{q}_{j}<p_{j}\cdot \bar{q}%
_{j},
\end{equation*}%
contradicting the assumption that $\bar{q}_{j}$ solves the utility
maximization problem (\ref{q_bar}) of consumer $j.$ Furthermore, for
consumer $j^{\ast }$, by definition of $\bar{q}_{j^{\ast }}$, the inequality
$u_{2}\left( q_{j^{\ast }}\right) >u_{j^{\ast }}\left( \bar{q}_{j^{\ast
}}\right) $ implies $p_{j^{\ast }}\cdot q_{j^{\ast }}>p_{j^{\ast }}\cdot
\bar{q}_{j^{\ast }}.$ Thus, we know $p_{j}\cdot q_{j}\geq p_{j}\cdot \bar{q}%
_{j}$ for all $j$ with a strict inequality for $j=j^{\ast }.$ Since $%
p_{j}=\lambda _{j}p_{1},$ $j=1,...,\ell $, we know $p_{1}\cdot q_{j}\geq
p_{1}\cdot \bar{q}_{j}$ for all $j$ with a strict inequality for $j=j^{\ast
} $. Summing over $j$ yields
\begin{equation*}
\sum_{j=1}^{\ell }p_{1}\cdot q_{j}>\sum_{j=1}^{\ell }p_{1}\cdot \bar{q}_{j}.
\end{equation*}%
Meanwhile, the feasibility of $q$ implies $\sum_{j=1}^{\ell
}q_{j}=\sum_{j=1}^{\ell }\bar{q}_{j}.$ Multiplying both sides by $p_{1}$
leads to
\begin{equation*}
\sum_{j=1}^{\ell }p_{1}\cdot q_{j}=\sum_{j=1}^{\ell }p_{1}\cdot \bar{q}_{j},
\end{equation*}%
a contradiction.
\end{proof}

\begin{corollary}
If there is only one good ($k=1$) or one consumer ($\ell =1$), then $%
\mathcal{V}\left( G\right) =0$ for any $G\in \Omega \left( \bar{q}\right) $.
\end{corollary}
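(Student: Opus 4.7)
The plan is to reduce both cases directly to Proposition \ref{collinear1}, which asserts that collinearity of the price vectors forces $\mathcal{V}(G)=0$. In each of the two scenarios, collinearity is either automatic or vacuous, so essentially no new work is required.

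For the case $k=1$, every consumer faces a price vector $p_{j}$ living in the one-dimensional space $\mathbb{R}_{++}^{1}$. Consequently $p_{j}$ is simply a positive scalar, and taking $\lambda_{j} := p_{j}/p_{1} > 0$ exhibits $p_{j} = \lambda_{j}p_{1}$ for each $j = 1,\ldots,\ell$. Thus the hypotheses of Proposition \ref{collinear1} are satisfied trivially, and we conclude $\mathcal{V}(G) = 0$ for every $G \in \Omega(\bar{q})$.

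For the case $\ell = 1$, the collinearity condition involves only the single price vector $p_{1}$ and is vacuously satisfied (one can simply set $\lambda_{1} = 1$). Alternatively, one can observe directly that the marginal condition in (\ref{margins}) forces $q_{i1} = m_{i} = \bar{q}_{i1}$ for every $i$ and every transport plan $q \in Plan(\mathbf{a},\mathbf{b})$, so that $\mathcal{F}_{G} = \{\bar{q}\}$ and $\mathcal{V}(G) = 0$ follows immediately from the definition \eqref{main_problem}. Either way the conclusion is immediate.

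There is no substantive obstacle here; the corollary is really a sanity check confirming that without either multiple goods or multiple consumers there is nothing to exchange, and the formalism of Proposition \ref{collinear1} captures this cleanly. The only small care needed is to verify that the $\lambda_{j}>0$ requirement in the collinearity hypothesis is met, which is immediate since the $p_{j}$ lie in $\mathbb{R}_{++}^{k}$.
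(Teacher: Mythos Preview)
Your proposal is correct and matches the paper's proof almost verbatim: for $k=1$ the paper defines $\lambda_j = p_j/p_1$ and invokes Proposition~\ref{collinear1} exactly as you do, and for $\ell=1$ the paper takes your ``alternative'' route, observing that the marginal conditions force $q_{i1}=m_i=\bar q_{i1}$ so that $\mathcal{F}_G=\{\bar q\}$. The only cosmetic difference is that the paper does not bother to note that Proposition~\ref{collinear1} also applies vacuously when $\ell=1$.
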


\begin{proof}
When $k=1,$ define $\lambda _{j}=\frac{p_{j}}{p_{1}}>0,$ $j=1,...,\ell .$
The result follows from Proposition \ref{collinear1}. When $\ell =1,$ for
any $G\in \Omega \left( \bar{q}\right) ,$ the feasible set is
\begin{equation*}
\mathcal{F}_{G}=\left\{ q_{1}=\left( q_{11},\cdots ,q_{k1}\right) \in
Plan\left( \mathbf{a,b}\right) \left| q_{i1}=m_{i}=\bar{q}_{i1}\text{ \ for
each }i\right. \right\} =\left\{ \bar{q}_{1}\right\} ,
\end{equation*}%
which clearly yields $\mathcal{V}\left( G\right) =0.$
\end{proof}

\begin{proposition}
\label{collinear2}Let $k=2$ and $\ell =2$. Suppose $u_{j}$ is differentiable
at $\bar{q}$ with $\nabla u_{j}\left( \bar{q}_{j}\right) >0,$ $j=1,2$ and $%
\bar{q}_{ij}>0$ for each $i,j$. If $G\in \Omega \left( \bar{q}\right) $ with
\begin{equation}
V\left( g_{12}\right) \cap V\left( g_{21}\right) \neq \emptyset ,\text{ and }%
p_{21}>p_{11},p_{12}>p_{22},  \label{price_condition}
\end{equation}%
then $\mathcal{V}\left( G\right) >0$.
\end{proposition}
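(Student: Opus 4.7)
The plan is to produce a one-parameter ``swap'' perturbation of $\bar{q}$ that remains compatible with $G$, strictly improves both consumers' utilities, and then to invoke the strict monotonicity of $e_j$ in utility (Lemma \ref{Properties_e}) to conclude that $S$ strictly increases. Concretely, fix $v\in V(g_{12})\cap V(g_{21})$ and consider
\[
q(\epsilon)=\bar{q}+\epsilon\,\Delta,\qquad \Delta=\begin{pmatrix}-1 & +1\\ +1 & -1\end{pmatrix}.
\]
The vanishing row and column sums of $\Delta$ preserve the marginal equations (\ref{margins}), and because each $\bar{q}_{ij}>0$ the entries of $q(\epsilon)$ stay non-negative for $\epsilon$ small, so $q(\epsilon)\in Plan(\mathbf{a},\mathbf{b})$.

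The main obstacle is checking the edge equation (\ref{edge_equation}) for every $e\in E(G)$. Since all $\bar{q}_{ij}>0$, each $g_{ij}$ exists in $G$. Concatenating the $x_1\to v$ portion of $g_{12}$ with the $v\to y_1$ portion of $g_{21}$ yields a directed polyhedral curve from $x_1$ to $y_1$, so by the uniqueness-of-path assumption it must equal $g_{11}$; analogously $g_{22}$ equals the concatenation of the $x_2\to v$ portion of $g_{21}$ with the $v\to y_2$ portion of $g_{12}$. A short case analysis (repeatedly applying uniqueness of directed $x_i\to y_j$ curves to paths obtained by concatenating portions of the four $g_{ij}$) rules out any edge $e$ lying in exactly one $g_{ij}$, in exactly three of them, or only in a ``cross'' pair $\{g_{11},g_{22}\}$ or $\{g_{12},g_{21}\}$. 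What remains are edges that lie in an ``arm'' pair $\{g_{11},g_{12}\}$, $\{g_{21},g_{22}\}$, $\{g_{11},g_{21}\}$, $\{g_{12},g_{22}\}$, or in all four $g_{ij}$; in each of these cases $\sum_{e\subseteq g_{ij}}\Delta_{ij}=0$, so (\ref{edge_equation}) for $\bar{q}$ transfers to $q(\epsilon)$ and compatibility is proved.

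Finally, the optimality of $\bar{q}_j$ in (\ref{q_bar}), together with $\bar{q}_{ij}>0$, differentiability of $u_j$ at $\bar{q}_j$ and $\nabla u_j(\bar{q}_j)>0$, forces the budget constraint to bind and yields the first-order condition $\nabla u_j(\bar{q}_j)=\lambda_j p_j$ for some $\lambda_j>0$. Hence
\[
\frac{d}{d\epsilon}\bigg|_{\epsilon=0}u_1\bigl(\bar{q}_1+\epsilon(-1,+1)\bigr)=\lambda_1(p_{21}-p_{11})>0,
\]
and analogously $\frac{d}{d\epsilon}|_{0}u_2(\bar{q}_2+\epsilon(+1,-1))=\lambda_2(p_{12}-p_{22})>0$. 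Thus for sufficiently small $\epsilon>0$ we have $u_j(q_j(\epsilon))>u_j(\bar{q}_j)$ for $j=1,2$, so $q(\epsilon)\in\mathcal{F}_G$. By Lemma \ref{Properties_e}, $e_j(p_j,\cdot)$ is strictly increasing in utility, which yields $S(q(\epsilon))>S(\bar{q})$ and therefore $\mathcal{V}(G)\geq S(q(\epsilon))-S(\bar{q})>0$.
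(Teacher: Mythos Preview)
Your proof is correct and follows essentially the same approach as the paper: the same ``swap'' perturbation $\tilde q=\bar q+\epsilon\Delta$, the same first-order-condition argument for $u_j(\tilde q_j)>u_j(\bar q_j)$, and the same appeal to Lemma~\ref{Properties_e} for $S(\tilde q)>S(\bar q)$. For compatibility the paper phrases things slightly more compactly---it decomposes $g_{12}$ and $g_{21}$ into curves $\gamma_1,\dots,\gamma_5$ and reads off the chain identity $g_{11}+g_{22}=g_{12}+g_{21}$, which immediately gives $g\cdot\tilde q=g\cdot\bar q$---but your concatenation-and-uniqueness argument is exactly what produces that identity, and your edge-by-edge case analysis is just an alternative way to unpack it.
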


\begin{proof}
Since $g_{12}$ and $g_{21}$ overlap, we denote $\gamma _{2}$ to be the curve
where $g_{12}$ and $g_{21}$ overlap with endpoints $z_{1}$ and $z_{2}$. Let $%
\gamma _{1}$,$\gamma _{3}$, $\gamma _{4}$ and $\gamma _{5}$ be the
corresponding curves from $x_{1}$ to $z_{1}$, $z_{2}$ to $y_{1}$, $x_{2}$ to
$z_{1},$ and $z_{2}$ to $y_{2}$ respectively. Then, these $\gamma _{i}$'s
are disjoint except at their endpoints. See Figure \ref{fig2}. Now, we may
express $g_{ij}$'s as
\begin{eqnarray*}
g_{11} &=&\gamma _{1}+\gamma _{2}+\gamma _{3}, \\
g_{21} &=&\gamma _{4}+\gamma _{2}+\gamma _{3}, \\
g_{12} &=&\gamma _{1}+\gamma _{2}+\gamma _{5}, \\
g_{22} &=&\gamma _{4}+\gamma _{2}+\gamma _{5},
\end{eqnarray*}%
which imply
\begin{equation}
g_{11}+g_{22}=g_{12}+g_{21}.  \label{equation_g}
\end{equation}%
\begin{figure}[h!]
\centering \includegraphics[width=0.4\textwidth, height=2in]{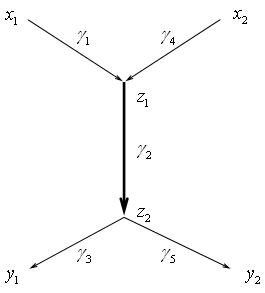}
\caption{A positive exchange value}
\label{fig2}
\end{figure}

Now, let
\begin{equation*}
\tilde{q}=\bar{q}+\left(
\begin{array}{cc}
-\epsilon & \epsilon \\
\epsilon & -\epsilon%
\end{array}%
\right) ,
\end{equation*}%
where $\epsilon $ is a sufficiently small positive number. Then, by (\ref%
{equation_g}),
\begin{eqnarray*}
g\cdot \tilde{q} &=&g\cdot \left( \bar{q}+\left(
\begin{array}{cc}
-\epsilon & \epsilon \\
\epsilon & -\epsilon%
\end{array}%
\right) \right) \\
&=&g\cdot \bar{q}+\epsilon \left( -g_{11}+g_{12}+g_{21}-g_{22}\right)
=g\cdot \bar{q},
\end{eqnarray*}%
which shows that $\tilde{q}$ is compatible with $G$. Now, we show $%
u_{1}\left( \tilde{q}_{1}\right) >u_{1}\left( \bar{q}_{1}\right) $. Since $%
\bar{q}_{1}=\left( \bar{q}_{11},\bar{q}_{21}\right) \in \mathbb{R}_{++}^{2}$
is derived from the utility maximization problem (\ref{q_bar}) of consumer $%
1,$ it must satisfy the first order condition at $\bar{q}_{1}$:%
\begin{equation}
\partial u_{1}\left( \bar{q}_{1}\right) /\partial q_{1}=\lambda p_{11}\text{
and }\partial u_{1}\left( \bar{q}_{1}\right) /\partial q_{2}=\lambda p_{21}
\label{FOC1}
\end{equation}%
for some $\lambda >0$. Thus, using Taylor's Theorem, we have
\begin{eqnarray*}
u_{1}\left( \tilde{q}_{1}\right) &=&u_{1}\left( \bar{q}_{1}\right) +\frac{%
\partial u_{1}\left( \bar{q}_{1}\right) }{\partial q_{11}}\left( \tilde{q}%
_{11}-\bar{q}_{11}\right) +\frac{\partial u_{1}\left( \bar{q}_{1}\right) }{%
\partial q_{21}}\left( \tilde{q}_{21}-\bar{q}_{21}\right) +o\left( \epsilon
\right) \\
&=&u_{1}\left( \bar{q}_{1}\right) +\lambda p_{11}\left( -\epsilon \right)
+\lambda p_{21}\epsilon +o\left( \epsilon \right) \text{, by (\ref{FOC1})} \\
&=&u_{1}\left( \bar{q}_{1}\right) +\lambda \epsilon \left(
p_{21}-p_{11}\right) +o\left( \epsilon \right) \\
&>&u_{1}\left( \bar{q}_{1}\right) \text{, by (\ref{price_condition}).}
\end{eqnarray*}%
Similarly, we have $u_{2}\left( \tilde{q}_{2}\right) >u_{2}\left( \bar{q}%
_{2}\right) $. This shows that $\tilde{q}\in \mathcal{F}_{G}$. By Lemma \ref%
{Properties_e}, we have $S\left( \tilde{q}\right) >S\left( \bar{q}\right) $,
and thus
\begin{equation*}
\mathcal{V}\left( G\right) =\underset{q\in \mathcal{F}_{G}}{\max }\text{ }%
S\left( q\right) -S\left( \bar{q}\right) \geq S\left( \tilde{q}\right)
-S\left( \bar{q}\right) >0.
\end{equation*}
\end{proof}

\begin{theorem}
\label{colliner_general}Suppose $u_{j}$ is differentiable at $\bar{q}$ with $%
\nabla u_{j}\left( \bar{q}_{j}\right) \in \mathbb{R}_{++}^{k},$ $%
j=1,...,\ell ,$ and $\bar{q}\in \mathbb{R}_{++}^{k\ell }.$ If there exists
some $i_{1}\neq i_{2}\in \left\{ 1,...,k\right\} $, $j_{1}\neq j_{2}\in
\left\{ 1,...,\ell \right\} $ satisfies
\begin{equation*}
p_{i_{2}j_{1}}>p_{i_{1}j_{1}},p_{i_{1}j_{2}}>p_{i_{2}j_{2}}\text{ and }%
V\left( g_{i_{1}j_{2}}\right) \cap V\left( g_{i_{2}j_{1}}\right) \neq
\emptyset
\end{equation*}%
for $G\in \Omega \left( \bar{q}\right) $, then $\mathcal{V}\left( G\right)
>0 $.
\end{theorem}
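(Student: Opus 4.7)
The plan is to mimic the construction used in the proof of Proposition \ref{collinear2}, lifting it to general $k$ and $\ell$ by operating on the $2\times 2$ submatrix indexed by $(i_1,i_2)\times(j_1,j_2)$ and leaving every other consumer and source untouched. Concretely, I will perturb $\bar q$ to $\tilde q=\bar q+\epsilon M$, where $M_{i_1j_1}=M_{i_2j_2}=-1$, $M_{i_1j_2}=M_{i_2j_1}=+1$, and all remaining entries are $0$. Because $\bar q\in\mathbb{R}_{++}^{k\ell}$, for small $\epsilon>0$ the matrix $\tilde q$ has nonnegative entries and the same row and column sums as $\bar q$, so $\tilde q\in Plan(\mathbf{a},\mathbf{b})$. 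The real work is then to verify (a) compatibility of $\tilde q$ with $G$ and (b) that consumers $j_1$ and $j_2$ strictly improve their utility while all other utilities are unchanged.

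For (a) I need the chain identity $g_{i_1j_1}+g_{i_2j_2}=g_{i_1j_2}+g_{i_2j_1}$, and this is the main obstacle; everything else is a routine upgrade of the $k=\ell=2$ calculation. First, since $\bar q_{ij}>0$ for every $(i,j)$, the zero-if-missing rule (\ref{zero_g_ij}) forces each of the four curves $g_{i_1j_1}$, $g_{i_1j_2}$, $g_{i_2j_1}$, $g_{i_2j_2}$ to exist. Next, picking any $v\in V(g_{i_1j_2})\cap V(g_{i_2j_1})$, I would split the two overlapping curves at $v$ as $g_{i_1j_2}=\alpha+\gamma$ and $g_{i_2j_1}=\beta+\delta$, where $\alpha,\beta$ run from $x_{i_1},x_{i_2}$ to $v$ and $\gamma,\delta$ run from $v$ to $y_{j_2},y_{j_1}$. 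Then $\alpha+\delta$ is a directed polyhedral curve in $G$ from $x_{i_1}$ to $y_{j_1}$, and the standing at-most-one-path assumption forces $g_{i_1j_1}=\alpha+\delta$; similarly $g_{i_2j_2}=\beta+\gamma$. Adding these two equalities gives the desired identity, and compatibility of $\tilde q$ follows at once because $g\cdot\tilde q=g\cdot\bar q+\epsilon(-g_{i_1j_1}+g_{i_1j_2}+g_{i_2j_1}-g_{i_2j_2})=G$. This identity replaces the picture-based argument in Proposition \ref{collinear2}, where all four paths visibly shared a common overlap segment $\gamma_2$; in the general setting the topology of $G$ can be far more intricate, so uniqueness of paths is doing the essential work.

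For (b) I would use that $\bar q_{j_1}\in\mathbb{R}_{++}^k$ solves (\ref{q_bar}). Since $\nabla u_{j_1}(\bar q_{j_1})\in\mathbb{R}_{++}^k$, the budget constraint binds and the first-order condition yields $\nabla u_{j_1}(\bar q_{j_1})=\lambda_{j_1}p_{j_1}$ with $\lambda_{j_1}>0$. A first-order Taylor expansion combined with the price inequality $p_{i_2j_1}>p_{i_1j_1}$ then gives
\[
u_{j_1}(\tilde q_{j_1})-u_{j_1}(\bar q_{j_1})=\lambda_{j_1}\epsilon(p_{i_2j_1}-p_{i_1j_1})+o(\epsilon)>0
\]
for $\epsilon$ small enough, and the parallel argument using $p_{i_1j_2}>p_{i_2j_2}$ handles consumer $j_2$. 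Since $\tilde q_j=\bar q_j$ for every $j\notin\{j_1,j_2\}$, we conclude $\tilde q\in\mathcal{F}_G$. Finally, the strict monotonicity of $e_{j_1}$ in $\tilde u_{j_1}$ from Lemma \ref{Properties_e} upgrades the strict utility gain to $S(\tilde q)>S(\bar q)$, so that $\mathcal{V}(G)\geq S(\tilde q)-S(\bar q)>0$.
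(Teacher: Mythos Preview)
Your proposal is correct and follows essentially the same approach as the paper, whose proof consists solely of the sentence ``This follows from an analogous proof of Proposition \ref{collinear2}''; you have carried out precisely that analogous argument, perturbing only the $2\times 2$ block indexed by $(i_1,i_2)\times(j_1,j_2)$ and repeating the first-order-condition computation. Your derivation of the chain identity $g_{i_1j_1}+g_{i_2j_2}=g_{i_1j_2}+g_{i_2j_1}$ via a single shared vertex and the at-most-one-path assumption is in fact a bit more explicit than the picture-based overlap-segment decomposition used in Proposition \ref{collinear2}.
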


\begin{proof}
This follows from an analogous proof of proposition \ref{collinear2}, as
shown in Figure \ref{fig3}.
\end{proof}

\begin{figure}[h!]
\label{picture3} \centering \includegraphics[width=0.45\textwidth]{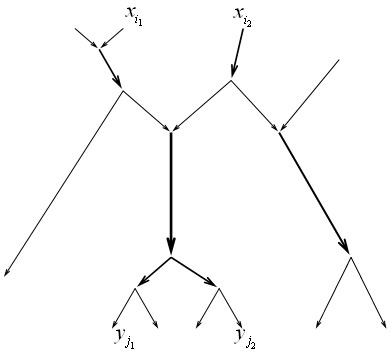}
\caption{A ramified transport system with positive exchange value.}
\label{fig3}
\end{figure}

To conclude this section, we've seen how transport structures, preferences
and prices jointly determine the exchange values. Each of these factors may
lead to a zero exchange value under very rare situations. More precisely,
when the structure of the transport system yields a singleton feasible set $%
\mathcal{F}_{G}$ (Corollary \ref{corollary_kl}, Proposition \ref%
{pairwise_dis})$,$ or the utility functions are merely quantity dependent
(Proposition \ref{quantity_u}), or price vectors are collinear across
consumers (Proposition \ref{collinear1}), the exchange value is zero.
However, under more regular situations, there exists a positive exchange
value for a ramified transport system. For instance, if the utility
functions satisfy the conditions in (3) of Theorem \ref{theorem 1} with a
non-singleton feasible set $\mathcal{F}_{G}$ (Theorem \ref{theorem 1}) or
the transport systems are of ramified structures with some non collinear
price vectors (Theorem \ref{colliner_general}), there exists a positive
exchange value.

\section{A New Optimal Transport Problem}

In the previous section, we have considered the exchange value $\mathcal{V}%
\left( G\right) $ for any $G\in \Omega \left( \bar{q}\right) $. A natural
question would be whether there exists a $G^{\ast }$ that maximizes $%
\mathcal{V}\left( G\right) $ among all $G\in \Omega \left( \bar{q}\right) $.
The answer to this question has already been provided in Proposition \ref%
{upperbound} as the particular transport path $\bar{G}\in \Omega \left( \bar{%
q}\right) $ is an obvious maximizer. However, despite the fact that $\bar{G}$
maximizes exchange value, it may be inefficient when accounting for
transportation cost. Nevertheless, as indicated previously, one should not
neglect the benefit of obtaining an exchange value from a transport system.
As a result, it is reasonable to consider both transportation cost and
exchange value together when designing a transport system.

Recall that in \cite{xia1} etc, a ramified transport system is modeled by a
transport path between two probability measures $\mathbf{a}$ and $\mathbf{b}$%
. For each transport path $G\in Path\left( \mathbf{a,b}\right) $ and any $%
\alpha \in \left[ 0,1\right] $, the $\mathbf{M}_{\alpha }$\textbf{\ }cost of
$G$ is defined by
\begin{equation}
\mathbf{M}_{\alpha }\left( G\right) :=\sum_{e\in E\left( G\right) }w\left(
e\right) ^{\alpha }length\left( e\right) .  \label{M_a_cost}
\end{equation}%
When $\alpha <1$, a \textquotedblleft Y-shaped\textquotedblright\ path from
two sources to one target is usually more preferable than a
\textquotedblleft V-shaped\textquotedblright\ path. In general, a transport
path with a branching structure may be more cost efficient than the one with
a \textquotedblleft linear\textquotedblright\ structure. A transport path $%
G\in Path\left( \mathbf{a,b}\right) $ is called an $\alpha -$\textit{optimal
transport path} if it is an $\mathbf{M}_{\alpha }$ minimizer in $Path\left(
\mathbf{a,b}\right) $.

Based on the above discussions, we propose the following minimization
problem.

\begin{problem}
Given two atomic probability measures $\mathbf{a}$ and $\mathbf{b}$ on $X$
in an economy $\mathcal{E}$ given by (\ref{economy}), find a minimizer of
\begin{equation}
H_{\alpha ,\sigma }\left( G\right) :=M_{\alpha }\left( G\right) -\sigma
\mathcal{V}\left( G\right)  \label{new transport}
\end{equation}%
among all $G\in \Omega \left( \bar{q}\right) $, where $\Omega \left( \bar{q}%
\right) $ is given by (\ref{Omega}), and $\alpha \in \lbrack 0,1)$ and $%
\sigma \geq 0$ are fixed constants.
\end{problem}

When the utility functions are merely quantity dependent (Proposition \ref%
{quantity_u}) or when price vectors are collinear across consumers
(Proposition \ref{collinear1}), the exchange value of any $G\in \Omega
\left( \bar{q}\right) $ is always zero. In these cases, $H_{\alpha ,\sigma
}\left( G\right) =M_{\alpha }\left( G\right) $ for any $\sigma $. Thus, the
study of $H_{\alpha ,\sigma }$ coincides with that of $M_{\alpha }$, which
can be found in existing literature (e.g. \cite{xia1}, \cite{book}).
However, as seen in the previous section, it is quite possible that $%
H_{\alpha ,\sigma }$ does not agree with $M_{\alpha }$ on $\Omega \left(
\bar{q}\right) $ for $\sigma >0$ in a general economy $\mathcal{E}$.

As $\mathcal{V}$ is topologically invariant (Proposition \ref{topology}),
many results that can be found in literature about $M_{\alpha }$ still hold
for $H_{\alpha ,\sigma }$. For instance, the Melzak algorithm for finding an
$M_{\alpha }$ minimizer (\cite{melzak}, \cite{gilbert}, \cite{book}) in a
fixed topological class still applies to $H_{\alpha ,\sigma }$ because $%
\mathcal{V}\left( G\right) $ is simply a constant within each topological
class. Also, as the balance equation (\ref{balance}) still holds, one can
still calculate angles between edges at each vertex using existing formulas (%
\cite{xia1}), and then get a universal upper bound on the degree of vertices
on an optimal $H_{\alpha ,\sigma }$ path.

However, due to the existence of exchange value, one may possibly favor an
optimal $H_{\alpha ,\sigma }$ path instead of the usual optimal $M_{\alpha }$
path when designing a transport system. The topological type of the optimal $%
H_{\alpha ,\sigma }$ path may differ from that of the optimal $M_{\alpha }$
path. This observation is illustrated by the following example.

\begin{figure}[h]
\centering
\subfloat[
$G_1$]{\includegraphics[width=0.3\textwidth,
height=1.6in]{Fig1a.PNG}\label{ga}} \hspace{0.25in}
\subfloat[
$G_2$]{\includegraphics[width=0.3\textwidth, height=1.6in]{Fig1b.PNG}\label{gb}}
\hspace{0.25in}
\subfloat[
$G_3$]{\includegraphics[width=0.3\textwidth, height=1.6in]{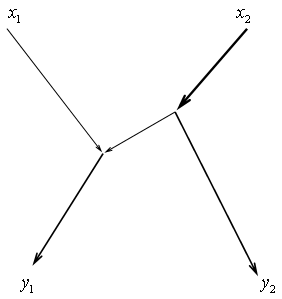}\label{gc}}
\caption{Three topologically different transport systems.}
\label{fig5}
\end{figure}
\begin{example}
Let us consider the transportation from two sources to two consumers.
If we only consider minimizing $\mathbf{M}_{\alpha }$ transportation cost,
each of the three topologically different types shown in Figure \ref{fig5}
may occur. However, when $\sigma $ is sufficiently large, only $G_{2}$ in
Figure \ref{gb} may be selected under suitable conditions of $u$ and $p$.
This is because $G_{2}$ has a positive exchange value which does not exist
in either $G_{1}$ or $G_{3}$.
\end{example}


\begin{thebibliography}{99}
\bibitem{Ambrosio} L. Ambrosio. Lecture notes on optimal transport problems.
Mathematical aspects of evolving interfaces (Funchal, 2000), 1--52, \textit{%
Lecture Notes in Math.}, 1812, Springer, Berlin, 2003.

\bibitem{Arrow} K. Arrow. and F. Hahn. \textit{General Equilibrium Analysis}%
. San Francisco: Holden-Day, 1971.

\bibitem{Brenier} Y. Brenier. D\'{e}composition polaire et r\'{e}arrangement
monotone des champs de vecteurs. C. R. Acad. Sci. Paris S\'{e}r. I Math. 305
(1987), no. 19, 805--808.

\bibitem{caffarelli} L.A. Caffarelli; M. Feldman; R. J. McCann. Constructing
optimal maps for Monge's transport problem as a limit of strictly convex
costs. \textit{J. Amer. Math. Soc.} 15 (2002), no. 1, 1--26

\bibitem{buttazzo} A. Brancolini, G. Buttazzo, F. Santambrogio, Path
functions over Wasserstein spaces. \textit{J. Eur. Math. Soc.} Vol. 8, No.3
(2006),415--434.

\bibitem{BCM} M. Bernot; V. Caselles; J. Morel, Traffic plans. \textit{Publ.
Mat.} 49 (2005), no. 2, 417--451.

\bibitem{book} M. Bernot; V. Caselles; J. Morel; Optimal Transportation
Networks: Models and Theory. Series: \textit{Lecture Notes in Mathematics} ,
Vol. 1955 , (2009).

\bibitem{buttazzo} G. Buttazzo and G. Carlier. Optimal spatial pricing strategies with transportation costs. To appear in \textit{Contemp. Math.}.

\bibitem{Chipman} J. Chipman and J. Moore. Compensating Variation,
Consumer's Surplus, and Welfare. \textit{American Economic Review}, Vol 70,
No. 5, 1980, 933-949.

\bibitem{DH} T. De Pauw and R. Hardt. Size minimization and approximating
problems, \textit{Calc. Var. Partial Differential Equations} 17 (2003),
405-442.

\bibitem{Solimini} G. Devillanova and S. Solimini. On the dimension of an
irrigable measure. \textit{Rend. Semin. Mat. Univ.} Padova 117 (2007), 1--49.

\bibitem{Debreu} G. Debreu. The Coefficient of Resource Utilization, \textit{%
\ Econometrica}, Vol. 19, No. 3, 1951, 273-292.

\bibitem{Debreu2} G. Debreu. \textit{Theory of Value}. Wiley, New York, 1959.

\bibitem{Eaton} J. Eaton and S. Kortum. Technology, Geography, and Trade,
\textit{Econometrica}, Vol. 70, No. 5, 2002, 1741-1779.

\bibitem{evans} L.C. Evans and R. Gariepy. \textit{Measure theory and fine
properites of functions}. Stud. Adv. M ath., CRC Press, 1992

\bibitem{evan2} L. C. Evans; W. Gangbo. Differential equations methods for
the Monge-Kantorovich mass transfer problem. \textit{Mem. Amer. Math. Soc.}
137 (1999), no. 653.

\bibitem{mccann1}A. Figalli, Y.H. Kim and R.J. McCann. When is multidimensional screening a convex program? Preprint (2010).

\bibitem{mccann} W. Gangbo; R. J. McCann. The geometry of optimal
transportation. \textit{Acta Math}. 177 (1996), no. 2, 113--161.

\bibitem{gilbert} E.N. Gilbert. Minimum cost communication networks, \textit{%
Bell System Tech. J.} 46, (1967), pp. 2209-2227.

\bibitem{otto} R. Jordan, D. Kinderlehrer and F. Otto. The variational formulation of the Fokker-Planck equation.
\textit{SIAM J. Math. Anal.} 29,1 (1998), 1-17.

\bibitem{kantorovich} L. Kantorovich. On the translocation of masses. C.R.
(Doklady) Acad. Sci. URSS (N.S.), 37:199-201, 1942.

\bibitem{Koopmans} T. Koopmans. \textit{Three Essays on the State of
Economic Science}, McGraw-Hill, New York, 1957.

\bibitem{Lange} O. Lange. The Foundation of Welfare Economics, \textit{%
Econometrica}, Vol. 10, No. 3/4, 1942, 215-228.

\bibitem{mtw} X. Ma, N. Trudinger, and X.J. Wang. Regularity of potential
functions of the optimal transportation problem. \textit{Arch. Rat. Mech.
Anal}., 177(2005), 151-183.

\bibitem{msm} F. Maddalena, S. Solimini and J.M. Morel. A variational model
of irrigation patterns, \textit{Interfaces and Free Boundaries}, Volume 5,
Issue 4, (2003), pp. 391-416.

\bibitem{monge} G. Monge. M\'{e}moire sur la th\'{e}orie des d\'{e}blais et
de remblais, Histoire de l'Acad\'{e}mie Royale des Sciences de Paris, avec
les M\'{e}morires de Math\'{e}matique et de Physique pour la m\^{e}me ann%
\'{e}e, pages 666-704 (1781).

\bibitem{Mas-Colell} A. Mas-Colell, M. Whinston and J. Green. \textit{%
Microeconomic Theory}, Oxford University Press, New York, 1995.

\bibitem{melzak} Z.A. Melzak. On the problem of Steiner, \textit{\ Canad.
Math. Bull.} 4 (1961) 143-148.

\bibitem{paolini} E. Paolini and E. Stepanov. Optimal transportation
networks as flat chains. \textit{Interfaces and Free Boundaries}, 8 (2006),
393-436.

\bibitem{Samuelson} P. Samuelson. \textit{Foundations of Economic Analysis}.
Harvard University Press, Cambridge, 1947.

\bibitem{villani} C. Villani. Topics in mass transportation. \textit{AMS
Graduate Studies in Math}. 58 (2003).

\bibitem{white} B. White. Rectifiability of flat chains. \textit{Annals of
Mathematics} 150 (1999), no. 1, 165-184.

\bibitem{xia1} Q. Xia. Optimal paths related to transport problems. \textit{%
Communications in Contemporary Mathematics.} Vol. 5, No. 2 (2003) 251-279.

\bibitem{xia2} Q. Xia. Interior regularity of optimal transport paths.
\textit{Calculus of Variations and Partial Differential Equations.} 20
(2004), no. 3, 283--299.

\bibitem{xia3} Q. Xia. Boundary regularity of optimal transport paths.
Preprint.

\bibitem{xia4} Q. Xia. The formation of tree leaf. \textit{ESAIM Control
Optim. Calc. Var.} 13 (2007), no. 2, 359--377.

\bibitem{xia5} Q. Xia. The geodesic problem in quasimetric spaces. \textit{%
Journal of Geometric Analysis}. Volume 19, Issue2 (2009), 452--479.

\bibitem{xia6} Q. Xia and A. Vershynina. On the transport dimension of
measures. \textit{SIAM J. Math. Anal.} Volume 41, Issue 6, pp. 2407-2430
(2010).

\bibitem{xld} G. Xue, T. Lillys and D. Dougherty. Computing the Minimum Cost
Pipe Network Interconnecting One Sink and Many Sources. \textit{SIAM Journal
on Optimization}. Volume 10 , Issue 1 (1999) Pages: 22 - 42 .

\bibitem{zhangzhu} Zhang and Zhu. A bilevel programming method for pipe
network optimization. \textit{SIAM Journal on Optimization}, Vol 6, 838
(1996).
\end{thebibliography}
\end{document}